\documentclass[12pt]{article}
\usepackage{graphicx}
\usepackage{amsmath}
\usepackage{amssymb}
\usepackage{amsthm}
\usepackage{amsfonts}

\usepackage{url}

\input epsf
\input pstricks
\input pst-node
\input pst-plot
\input pst-eps

\def\bundle#1#2#3{#2\buildrel{#1}\over\longrightarrow #3}
\def\bydef{{\buildrel \rm def \over =}}
\newcommand{\Diff}{\hbox{Diff}}
\newcommand{\Free}{\hbox{Free}}
\newcommand{\Imm}{\hbox{Imm}}
\newcommand{\Hol}{\hbox{Hol}}
\newcommand{\codim}{\hbox{codim\ }}
\newcommand{\rk}{\hbox{rk\ }}

\newcommand{\cA}{{\cal A}}
\newcommand{\cD}{{\cal D}}
\newcommand{\cE}{{\cal E}}
\newcommand{\cF}{{\cal F}}
\newcommand{\cG}{{\cal G}}
\newcommand{\cH}{{\cal H}}
\newcommand{\cL}{{\cal L}}
\newcommand{\cM}{{\cal M}}

\newcommand{\cR}{{\cal R}}

\newcommand{\cU}{{\cal U}}
\newcommand{\bR}{{\mathbb R}}

\newcommand{\bT}{{\mathbb T}}

\newcommand{\R}{{\mathbb R}}

\newcommand{\Rt}{{\mathbb R^2}}
\newcommand{\Rq}{{\mathbb R^q}}
\newcommand{\Rn}{{\mathbb R^n}}

\newtheorem*{theorem*}{Theorem}
\newtheorem{theorem}{Theorem}
\newtheorem{lemma}{Lemma}
\newtheorem{example}{Example}
\newtheorem{remark}{Remark}
\newtheorem{corollary}{Corollary}

\newtheorem*{conjecture*}{Conjecture}
\newtheorem{definition}{Definition}
\newtheorem{proposition}{Proposition}
%\voffset -2.cm
%\hoffset -1.cm

\bibliographystyle{amsalpha}

\title{%
  Proof of a Gromov conjecture\\ 
  on the infinitesimal invertibility\\ 
  of the metric inducing operators
}
\author{Roberto De Leo\\ \small Howard University, Washington DC 20059 (USA)\\ \small INFN, Cagliari (Italy)}

\begin{document}
\maketitle
{\small\noindent {\bf Keywords}: Isometric Immersions, Free Maps, h-Principle; Undetermined Linear Partial Differential Operators}
\begin{abstract}
  We prove a conjecture of Gromov about non-free isometric immersions.
\end{abstract}  
\section{Introduction, main result and notations.}
Let $M$ be a $n$-dimensional manifold, $\Imm^1(M,\Rq)$ the set of $C^1$ immersions of $M$ into $\Rq$, 
$\cG^0(M)$ the bundle of $C^0$ Riemannian metrics over $M$ and $e_q$ the Euclidean metric on $\Rq$. 
%$S^0_2(M)$ the tensor bundle of symmetric tensors with two covarinat indices, 
% and $s_n=n(n+1)/2$.
Recall that a $C^2$ map $f$ is {\em free} when,
%in any coordinate system $(x^\alpha)$, $\alpha=1,\dots,n$, on $M$ and with respect to any co-frame $\{\eta^i\}$, $i=1,\dots,q$,
with respect to any coordinate system on $M$ and any frame on $\Rq$, the $n\times q$ matrix $D^2f$ of the first and second
partial derivatives of $f$ has rank $n+s_n$, $s_n=n(n+1)/2$, at every point. More generally, we call {\em 2-rank} of $f$ 
at $x$ the rank of the matrix $D^2f$ at $x$ -- e.g. a free map has 2-rank $n+s_n$ at every point.

It is well known (e.g. see~\cite{Gro86}, Sec.~2.3.1) that the metric inducing operator $\cD_{M,q}:\Imm^1(M,\Rq)\to\cG^0(M)$, 
defined by $\cD_{M,q}(f)=f^*e_q$, is an open map, in the Withney strong topology (throughout the present article we will
always use this topology for our functional spaces), over the (open) set of smooth free maps 
$\Free^\infty(M,\Rq)$. 
For example this means that, if $f_0\in\Free^\infty(M,\Rq)$ and $g_0=\cD_{M,q}(f_0)$ then, for every $g\in\cG^\infty(M)$ close enough 
to $g_0$, there is a map $f$ close enough to $f_0$ such that $g=\cD_{M,q}(f)$. In other words, the solvability of the 
Partial Differential Equation (PDE) 
\begin{equation}
  \label{eq:D(f)=g}
  \cD_{M,q}(f)=g 
\end{equation}
is stable with respect to small smooth perturbations over the (open) subset of the metrics induced by smooth free maps.

In a recent review article (\cite{Gro15}, p. 42) Gromov formulated the following conjecture:
\begin{conjecture*}
    If $q>s_n$, the operator $\cD_{M,q}$ is open over an open dense subset of $C^\infty(M,\Rq)$.
\end{conjecture*}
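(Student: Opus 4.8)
The plan is to derive the openness of $\cD_{M,q}$ over an open dense set from its \emph{infinitesimal invertibility} over such a set, and then to build the set. For the reduction I would invoke Gromov's implicit function theorem for infinitesimally invertible operators (\cite{Gro86}, Sec.~2.3): if, over an open $\cA\subset C^\infty(M,\Rq)$, the linearization $\cD'_f$ of $\cD_{M,q}$ at $f$ admits a right inverse $\cM_f$ which is a linear differential operator of uniformly bounded order in its argument with coefficients depending smoothly and differentiably on $f$, then the restriction of $\cD_{M,q}$ to $\cA$ is an open map — the tame estimates the Nash--Moser scheme requires being automatic for differential families of this kind. Since $q>s_n$ forces $q\ge 2n$ (equivalently $(n-1)(n-2)\ge 0$), immersions already form an open dense subset of $C^\infty(M,\Rq)$, so it is enough to find $\cA$ inside $\Imm^\infty(M,\Rq)$; thus the statement reduces to: \emph{for $q>s_n$, $\cD'_f$ is infinitesimally invertible for all $f$ in an open dense set of immersions}.

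To invert $\cD'_f$ I would prescribe the higher ``osculating components'' of the unknown map $v\colon M\to\Rq$. Writing $(\cD'_f v)_{ij}=\partial_i f\cdot\partial_j v+\partial_j f\cdot\partial_i v=h_{ij}$, I look for $v$ satisfying the pointwise linear conditions $\partial_\alpha f\cdot v=c_\alpha$ for all multi--indices with $1\le|\alpha|\le m$, the $c_\alpha$ being differential operators in $h$ yet to be chosen; write $\beta_i:=c_{e_i}$ for the first--order ones. Differentiating the identities $\partial_i f\cdot v=\beta_i$ and eliminating $\partial v$ with the equation yields the Christoffel--type recursion
\begin{equation*}
  2\,c_{e_i+e_j}=\partial_i\beta_j+\partial_j\beta_i-h_{ij},
\end{equation*}
and a one--line product--rule computation shows that any $v$ satisfying the first-- and second--order conditions with these values of $c_{e_i+e_j}$ automatically solves $\cD'_f v=h$, whatever the $c_\alpha$ with $|\alpha|\ge 3$ are; the latter therefore stay at our disposal. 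It remains to choose $\beta$ and the higher $c_\alpha$ so that the overdetermined pointwise system $\partial_\alpha f\cdot v=c_\alpha$, $|\alpha|\le m$, is consistent. I would take $m=m(n,q)$ minimal with $N_m:=\binom{n+m}{n}-1\ge n+q$; then for $f$ outside a closed subset of codimension $N_m-q+1>n$ of $J^m(M,\Rq)$ the vectors $\{\partial_\alpha f(x)\}_{|\alpha|\le m}$ span $\Rq$ for every $x$, so consistency of the system is equivalent to the compatibility relations $\sum_\alpha\mu_\alpha c_\alpha=0$, one for each of the $N_m-q$ independent linear relations $\sum_\alpha\mu_\alpha\partial_\alpha f=0$ among those vectors.

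Here is the counting step that uses the hypothesis. The $c_\alpha$ with $|\alpha|\ge 3$ enter the compatibility relations algebraically, so for generic $f$ one uses those relations to express as many of these $c_\alpha$ as their number allows; when $q\le n+s_n$ this fixes all of them and leaves $n+s_n-q$ relations, while when $q>n+s_n$ it leaves none, $\beta$ and the unused $c_\alpha$ being then set to zero. What survives is, in either case, a first--order linear system for the $n$ functions $\beta$ with $\max(n+s_n-q,0)$ equations and right--hand side algebraic in $h$ — and $q>s_n$ is exactly what makes it \emph{underdetermined}, $n+s_n-q<n$. For generic coefficients such a system admits a differential right inverse: after, if necessary, finitely many prolongations bringing in finitely many further derivatives of $f$, one inverts the symbol of the (prolonged) system by a differential operator, genericity entering as a coprimality condition on the coefficients. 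Applying this right inverse to the inhomogeneity produces $\beta$, hence all the $c_\alpha$, as differential operators in $h$; solving the now consistent system $\partial_\alpha f\cdot v=c_\alpha$, an algebraic operation in a finite jet of $f$, yields $v=\cM_f(h)$ as a linear differential operator of bounded order in $h$ with coefficients rational in a finite jet of $f$ and regular on $\cA$.

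It remains to set $\cA$ equal to the open locus of immersions $f$ for which the finitely many maximal--rank and symbol--surjectivity conditions used above hold; on $\cA$ the operator $\cM_f$ is defined and satisfies $\cD'_f\cM_f=\id$. Density of $\cA$ in $C^\infty(M,\Rq)$ would follow from the openness and density of immersions together with the Thom transversality theorem, each excluded locus in the relevant finite jet bundle of $f$ having positive codimension — and this codimension bookkeeping is where $q>s_n$ (and, in the elimination of the higher $c_\alpha$, $q\le n+s_n$ or else triviality) re--enters. I expect the main obstacle to be precisely this genericity analysis: showing that the coefficients of the residual $\beta$--system, which depend on the jet of $f$ through a composition of rank conditions and matrix inversions, are generic enough that the needed symbol surjectivity — or, after a controlled number of prolongations, its prolonged counterpart — holds for an open dense family of $f$, and that all the bad loci so produced have positive codimension. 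The purely algebraic parts — the recursion for the $c_\alpha$, the solvability of $\partial_\alpha f\cdot v=c_\alpha$, the identity $\cD'_f\cM_f=\id$ — and the reduction to immersions are, by comparison, routine.
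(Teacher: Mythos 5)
Your plan is the right one and broadly parallels the paper's: reduce openness to infinitesimal invertibility via Gromov's IFT (Theorem~\ref{thm:IFT}), set up Nash-type osculating conditions $\partial_\alpha f\cdot v = c_\alpha$, derive the Christoffel-type recursion for the second-order $c$'s, and reduce solvability of the linearization to the surjectivity of an underdetermined first-order \emph{compatibility} system for $\beta=(\beta_1,\dots,\beta_n)$ — this is exactly the compatibility operator $\cL_f$ of Definition~\ref{def:Lf}. Your use of osculating conditions of order $>2$ (rather than the paper's order $\leq 2$, which forces $f$ to be of full $2$-rank) is a genuine variant; it has the merit that the needed rank condition on $f$ becomes generic in $C^\infty(M,\Rq)$, whereas the paper's full-$2$-rank condition is \emph{not} dense when $q>s_n$ (the bad locus in $J^2$ has codimension $m+1=n+s_n-q+1\leq n$).

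The gap is in the step you dismiss last: the claim that, after elimination, the residual $(n+s_n-q)$-by-$n$ first-order system for $\beta$ can be right-inverted by prolongation for a \emph{generic} $f$. This is the whole problem, not a loose end. The coefficients of that system are not free: they are polynomials (after your elimination, rational functions) in a jet of $f$, and the symmetry of the higher partials of $f$ forces them to lie in a highly degenerate subvariety of the space of linear PDOs — the paper isolates this as the \emph{upper totally symmetric} (UTS) condition, Definition~\ref{def:UTS}, which cuts the dimension of the principal part from $m(m+1)\binom{n}{r}$ to $m\sum_{i=0}^m\binom{n-i}{r}$. Gromov's generic right-inversion needs the bad stratum in the $(r+s)$-jet bundle of the operator to have codimension $>n$, and inside the UTS family that codimension count (Theorems~\ref{thm:UTSrel} and~\ref{thm:Lf}) succeeds only when $q\geq m\sum_{i=0}^m\binom{n-i}{r}$ and $m\leq(n-1)/(r+1)$, which for $r=1$ forces roughly $m\lesssim\sqrt{n/2}$, i.e., $q\gtrsim n+s_n-\sqrt{n/2}$. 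That is vastly more restrictive than $q>s_n$, which only requires $m\leq n-1$. Your "coprimality/transversality" appeal for generic $f$ therefore cannot be taken on faith: the family is too thin, and for $\sqrt{n/2}\lesssim m\leq n-1$ nobody knows how to verify the transversality.

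Be aware also that the paper does not actually prove the statement you were asked to prove. Its theorems give openness of $\cD_{M,q}$ only over an open dense subset of the set of full $2$-rank maps, and only for $q\geq n+s_n-m_n$ with $m_n\approx\sqrt{n/2}$; since full $2$-rank maps are not dense in $C^\infty(M,\Rq)$ when $q>s_n$, the conjecture as stated (openness over an open dense subset of all of $C^\infty(M,\Rq)$, for all $q>s_n$) remains open. Your higher-order ansatz addresses the ambient-density issue but inherits the genericity obstruction; to close the gap you would have to prove a UTS-type codimension bound for the (more complicated, rational) family of compatibility operators arising from the order-$m$ elimination, and make it work down to $m=n-1$. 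There is no indication that this is possible.
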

In \cite{Gro86}, Sec.~2.3.8~(E), Gromov suggested to prove his conjecture when $q\geq n+s_n-\sqrt{n/2}$ by using
the same line of argument of his proof that a generic underdetermined linear Partial Differential Operator (PDO)
admits a right inverse. In~\cite{DL07}, G.~D'Ambra and A.~Loi made a first step in this direction by
showing that $\cD_{\Rt,4}$ is open over a non-empty subset of the (open) subset of maps whose 2-rank is full at every 
point by using ideas from the Lie systems results in~\cite{Gro86}, Sec.~2.3.8~(C). In~\cite{DeL10} we extended this result to 
$\cD_{\Rn,n+s_n-1}$ as a consequence of a general theorem by Duistermaat and Hormander. Here we go back to Gromov's
suggestion and we prove the following result (see Theorems~\ref{thm:infinv} and~\ref{thm:isom} and Corollary~\ref{cor:open}
in Sec.~\ref{sec:mr} for more precise and slightly stronger related statements):
\begin{theorem*}
  If $q\geq n+s_n-\sqrt{n/2}+1/2$, the operator $\cD_{M,q}$ is open over an open dense subset of the set of $C^\infty$ maps 
  $M\to\R^q$ of full 2-rank.
%Let $m<\frac{\sqrt{(n+2)^2+4(n+1)s_n}-(n+2)}{2(n+1)}\lesssim\sqrt{n/2}$. Then the isometric operator $\cD_{M,q}$ 
%  is infinitesimally invertible over generic maps of constant full 2-rank for $q=n+s_n-m$.
\end{theorem*}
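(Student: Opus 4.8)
The plan is to follow Gromov's own suggestion from \cite{Gro86}, Sec.~2.3.8~(E): reduce the nonlinear problem of openness of $\cD_{M,q}$ to the infinitesimal invertibility of its linearization, and then establish that infinitesimal invertibility by the same device Gromov uses for generic underdetermined linear PDOs — namely, construct a right inverse of the symbol. The linearization of $\cD_{M,q}$ at a map $f$ sends a variation $h:M\to\R^q$ to the symmetric $2$-tensor $L_f h = 2\,\langle df, dh\rangle$, i.e. in coordinates $(L_f h)_{ij} = \partial_i f\cdot\partial_j h + \partial_j f\cdot\partial_i h$. This is a first-order linear PDO from sections of a rank-$q$ bundle to sections of the rank-$s_n$ bundle of symmetric $2$-tensors. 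Its symbol in a covector direction $\xi$ is the linear map $\sigma_\xi(f): \R^q\to S^2(\R^n)^*$ given by $v\mapsto df(\xi)\odot (\,\cdot\,)$ paired against $v$ — concretely $v\mapsto (\partial_i f\cdot v)\,\xi_j + (\partial_j f\cdot v)\,\xi_i$. I would first carry out the Nash–Moser / $h$-principle reduction: if $L_f$ admits a smoothing right inverse that depends on $f$ in a tame way over an open set of $f$, then $\cD_{M,q}$ is open there. This reduction is standard Nash–Moser machinery once infinitesimal invertibility with loss of derivatives is in hand, so I would cite it rather than reprove it.

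**The core linear-algebra / PDO step.** The heart of the matter is: for which $q$, and over which maps $f$, does the first-order operator $L_f$ admit a right inverse? Gromov's criterion for a generic underdetermined operator of order one with $\ell$ equations and $q$ unknowns to be invertible is a codimension count: one needs the algebraic variety of covectors $\xi$ at which the symbol $\sigma_\xi(f)$ fails to be surjective to have codimension $\ge 2$ (or some such) in the cotangent sphere, so that one can integrate around the bad set. Here $\ell = s_n$ and the relevant count, after unwinding, is exactly that the "non-surjectivity locus" of the symbol has positive codimension bounded below by a quantity governed by $q - (n+s_n)$; the threshold $q \ge n + s_n - \sqrt{n/2} + 1/2$ is precisely the arithmetic condition under which a quadratic inequality $q(q+1)/2 \ge s_n + (\text{something}) $, or rather a counting of how many directions $\xi$ can be "absorbed," comes out right. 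I would make this precise by: (i) computing $\mathrm{rk}\,\sigma_\xi(f)$ explicitly — for fixed $\xi\ne 0$ the image of $\sigma_\xi$ is contained in the subspace $\xi\odot (\R^n)^*\subset S^2$, of dimension $n$, and surjectivity onto that subspace holds iff the $n$ vectors $\{\partial_i f\}$ span a space transverse enough to... — wait, more carefully: $\sigma_\xi$ maps onto $\xi\odot(\R^n)^*$ iff the map $v\mapsto (\partial_1 f\cdot v,\dots,\partial_n f\cdot v)$ restricted appropriately is onto, which holds when $q\ge n$ and $df$ has rank $n$; so the obstruction is not pointwise surjectivity of $L_f$ itself but rather the solvability scheme across varying $\xi$, and the genuine condition is a \emph{second}-order phenomenon encoded in whether one can patch solutions — this is where full $2$-rank of $f$ enters.

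**Why full 2-rank and the genericity.** The hypothesis that $f$ has full $2$-rank $n+s_n$ everywhere is what makes the zeroth- and first-order parts of $L_f$ and its "prolongation" behave well: it guarantees that the combined system built from $L_f$ and a suitable number of its derivatives has a symbol that is surjective off a small set, which is the hypothesis Gromov's abstract theorem needs. I would therefore: (1) state the abstract infinitesimal-invertibility theorem for underdetermined PDOs in the form I need (surjectivity of symbol off a subset of the right codimension $\Rightarrow$ existence of a right inverse with fixed loss of derivatives), citing \cite{Gro86} Sec.~2.3.8; (2) verify its hypotheses for $L_f$ when $q\ge n+s_n-\sqrt{n/2}+1/2$ and $f$ is free, getting infinitesimal invertibility over $\Free^\infty(M,\R^q)$; (3) extend from free maps to an \emph{open dense} subset of full-$2$-rank maps by a perturbation/stratification argument — the set where the relevant prolonged symbol degenerates is a closed nowhere-dense subvariety, so removing it leaves an open dense set on which the right inverse persists; (4) feed this into the Nash–Moser reduction to conclude openness of $\cD_{M,q}$ there. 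I expect the main obstacle to be step (2)–(3): pinning down \emph{exactly which} prolongation of $L_f$ to use and proving its symbol is surjective away from a set of the codimension Gromov's theorem demands — this is the calculation where the precise value $\sqrt{n/2}$ is forced, and it requires a careful count of how many independent conditions the second derivatives $\partial^2_{ij} f$ contribute relative to the $s_n$ target dimensions, plus checking that the degeneracy locus is genuinely algebraic of the claimed codimension and hence that its complement is open and dense.
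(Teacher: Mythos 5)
Your proposal misses the central reduction that makes the proof work, and one of your steps rests on a false premise about the geometry of full $2$-rank maps.

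The regime of interest is $s_n < q < n+s_n$, where $\Free^\infty(M,\R^q)=\emptyset$: a free map needs $D^2 f$ to have rank $n+s_n \leq q$, so your step (2) ``verify infinitesimal invertibility over $\Free^\infty$'' and step (3) ``extend from free maps to full $2$-rank maps by perturbation'' cannot be carried out — there is nothing to perturb from. Full $2$-rank in this paper means $\mathrm{rk}\,D^2 f = q$, \emph{not} $n+s_n$; when $q<n+s_n$ there are necessarily $m=n+s_n-q$ nontrivial linear relations $\lambda^\alpha_a\,\partial_\alpha f+\lambda^{\alpha\beta}_a\,\partial^2_{\alpha\beta}f=0$ holding at every point. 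That is precisely the new phenomenon the theorem addresses, and your argument never engages with it.

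The missing idea is Nash's algebraic trick together with the resulting \emph{compatibility PDO}. Using $\partial_\beta(\partial_\alpha f\cdot\delta f)=\partial^2_{\alpha\beta}f\cdot\delta f + \partial_\alpha f\cdot\partial_\beta\delta f$, the linearized equation is converted into the \emph{algebraic} system~(\ref{eq:NashTrick}) in $\delta f$, with $n$ auxiliary functions $h_\alpha$. When $D^2 f$ is surjective (free case) this is solvable for any $h_\alpha$; when $D^2 f$ has $m$ linear relations in its rows, solvability requires that the right-hand side of~(\ref{eq:NashTrick}) satisfy the \emph{same} $m$ relations, i.e.\ the $h_\alpha$ must solve a first-order linear system $\cL_f(h)=\text{(given)}$ of $m$ equations in $n$ unknowns — the compatibility operator of Definition~\ref{def:Lf}. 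It is \emph{this} underdetermined first-order PDO (not the linearization $T_f\cD_{M,q}$ itself, and not its principal symbol) to which one applies Gromov's genericity machinery for universal right inverses. Your discussion of $\sigma_\xi(f)$ and codimension of its non-surjectivity locus is aimed at the wrong operator: $T_f\cD_{M,q}$ is a fixed, highly non-generic PDO, so Gromov's ``generic underdetermined PDO is surjective'' theorem cannot be applied to it directly; it can, however, be applied (after adaptation to the ``upper totally symmetric'' class, Theorem~\ref{thm:UTS}) to $\cL_f$, because the coefficients $\lambda_a^{\alpha A}(f)$ turn out to be functionally independent for generic full $2$-rank $f$ (Lemma~\ref{lem:fi}). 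The numerical threshold comes out of the dimension count in Theorem~\ref{thm:Lf}: surjectivity of $\cL_f$ needs roughly $q\geq m\sum_{i=0}^m{n-i\choose 1}$, which with $m=n+s_n-q$ reduces to $m\leq m_n\approx\sqrt{n/2}$, hence $q\geq n+s_n-\sqrt{n/2}+1/2$ — not from a symbol-codimension estimate on $T_f\cD_{M,q}$. Finally, once $\cL_f$ is generically surjective the infinitesimal inverse is order $0$ (Theorem~\ref{thm:infinv}) and Gromov's implicit function Theorem~\ref{thm:IFT} gives openness; you do correctly anticipate this last Nash--Moser step, but everything leading up to it needs the compatibility-operator reduction you omit.
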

Our proof follows closely the Gromov proof on the right invertibility of generic undetermined linear
PDOs in Section 2.3.8~(E) of~\cite{Gro86}, so we will present in full detail only the parts of the proof 
that we needed to modify or that play some role in our construction.

Throughout the article we will use the following notations for indices: $i=1,\dots,q$, $a=1,\dots,m$, $\alpha,\beta=1,\dots,n$.
Whenever there will be no ambiguity, we will use the Einstein summation convention over repeated indices. Our base manifold will 
be always denoted by $M$, its points by $x$ and its dimension by $n$.
\section{The compatibility PDO $\cL_f$}
In local coordinates $(x^\alpha)$ on $M$ and $y^i$ on $\Rq$, Equation (\ref{eq:D(f)=g}) %$\cD_{M,q}(f)=g$ %, $f\in C^1(M,\Rq)$, 
writes as the following second order quadratic PDE of $s_n$ equations in the $q$ variables $f^i$:
\begin{equation}
  \label{eq:nonlin}
  \delta_{ij}\;\partial_\alpha f^i\;\partial_\beta f^j = g_{\alpha\beta}.
\end{equation}
By considering a 1-parametric smooth family of metrics $(g_t)_{\alpha\beta}$ and a corresponding smooth family 
of solutions $f^i_t$, by differentiating with respect to the parameter $t$ we obtain the following {\em linear} 
first order PDE of $s_n$ equations in the $q$ variables $\delta f^i$:
\begin{equation}
  \label{eq:lin}
  2\delta_{ij}\;\partial_\alpha f_0^i\;\partial_\beta \delta f^j = (\delta g_0)_{\alpha\beta},
\end{equation}
where we used the standard notation $\delta=d/dt|_{t=0}$. The $\delta f^j$ are components of 
a map $\delta f\in C^1(M,\Rq)$ that can be thought as a vector tangent to the curve $\{f_t\}\subset C^1(M,\Rq)$ at $f_0$.
%We say that $\cD_{M,\Rq}$ is {\em infinitesimally invertible} over $\cA\subset C^1(M,\Rq)$
%
\begin{definition}
  Let $\Gamma^0\left(S^0_2(M)\right)$ be the set of $C^0$ sections of the tensor bundle of symmetric $(0,2)$ tensors over $M$.
  The PDO 
  $$
  T_{f_0}\cD_{M,q}:T_{f_0}C^1(M,\Rq)\simeq C^1(M,\Rq)\to T_{g_0}\cG^0(M)\simeq\Gamma^0\left(S^0_2(M)\right)
  $$ 
  defined by 
  $$
  T_{f_0}\cD_{M,q}(\delta f)=2\delta_{ij}\;\partial_\alpha f_0^i\;\partial_\beta \delta f^j
  $$ 
%  where $\Gamma^0\left(S^0_2(M)\right)$ is the set of $C^0$ sections of the tensor bundle of symmetric tensors over $M$ with two covariant indices,
  is the tangent map (linearization) of $\cD_{M,q}$ at $f_0$.
  We say that $\cD_{M,q}$ is {\em infinitesimally invertible} over $\cA\subset C^1(M,\Rq)$ if there exists 
  a family $\cE$ of {\em linear} PDOs $\cE_f:\Gamma^s(S^0_2M)\to T_fC^0(M,\Rq)$, $f\in\cA$, 
  of some order $s$ such that:
  \begin{enumerate}
  \item there is an integer $d\geq1$, called {\em defect} of $\cE$, such that $\cA=\cA^d\subset C^d(M,\Rq)$ 
    and $\cA$ is defined by some {\em open} differential relation;
  \item the map $\cE:\cA^d\times\Gamma^s(S^0_2M)\to TC^0(M,\Rq)$ defined by $\cE(f,\eta)\to\cE_f(\eta)$ is a PDO of 
    order $d$ in the first variable and order $s$ in the second;
  \item $T_{f}\cD_{M,q}(\cE_{f}(\delta g))=\delta g$ for all $f\in\cA^d\cap C^{d+1}(M,\Rq)$ and $\delta g\in\Gamma^{s+1}(S^0_2M)$.
  \end{enumerate}
\end{definition}
%
%\begin{example}
%  \label{ex:Nash}
  Among the fundamental results of Nash in his celebrated article~\cite{Nas56} is that $\cD_{M,q}$ admits an infinitesimal
  inverse of order 0 (namely {\em algebraic}) and defect 2 over the set of free maps. This fact is an immediate
  consequence of the product rule for derivatives: indeed, since 
  $\partial_\beta\left(f^i\;\delta f^j\right)=\delta f^j\;\partial_\beta f^i+f^i\;\partial_\beta\delta f^j$, 
  the PDE system (\ref{eq:lin}) is equivalent to the following {\em algebraic} system of $n+s_n$ equation in the $q$ unknowns $\delta f^i$:
  \begin{equation}
    \label{eq:NashTrick}
    \begin{split}
      \delta_{ij}\;\partial_\alpha f_0^i\;\delta f^j &= h_\alpha,\\
      2\delta_{ij}\;\partial^2_{\alpha\beta}f_0^i\;\delta f^j &= \partial_\alpha h_\beta + \partial_\beta h_\alpha - (\delta g_0)_{\alpha\beta},
    \end{split}
  \end{equation}
  where the $h_\alpha$ are $n$ auxiliary functions. When $f_0$ is a free map, by definition the matrix of its
  first and second derivative is surjective at every point $x\in M$ and so system (\ref{eq:NashTrick}) is solvable 
  {\em independently on the choice of the $h_\alpha$} (which, therefore, are usually all set equal to zero).
%\end{example}

The following Implicit Function Theorem in infinite dimension, a powerful Gromov's generalization of the celebrated 
work of Nash on isometric immersions, allow us to focus, rather than on the original quadratic PDE system~(\ref{eq:nonlin}), 
on the (much simpler) linear system~(\ref{eq:NashTrick}):
\begin{theorem}[Gromov, 1986]
  \label{thm:IFT}
  Let $F\to E$ be a fiber bundle, $G\to E$ a vector bundle and $\Gamma^rF$ and $\Gamma^sG$, respectively, 
  the sets of their $C^r$ and $C^s$ sections.
  If a PDO of order $r$, $\cD_r:\Gamma^rF\to\Gamma^0G$, admits an infinitesimal inversion of order $s$ and defect $d$ over $\cA^d\subset\Gamma^dF$, 
  then for every $f_0\in\cA^d\cap\Gamma^\infty F$ there exists a neighborhood of zero $\cU\subset \Gamma^{\bar s+s+1}G$,
  $\bar s=\max\{d,2r+s\}$, such that, for every $g\in\cU\cap\Gamma^{\sigma+s}G$, $\sigma\geq \bar s+1$, the
  equation $\cD(f)=\cD(f_0)+g$ has a $C^\sigma$ solution.
\end{theorem}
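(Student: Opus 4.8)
\medskip\noindent\emph{Proof strategy (Nash--Moser iteration).}\ The plan is to prove this Nash--Moser implicit function theorem by the classical smoothed Newton iteration, essentially reproducing the scheme of \cite{Gro86}, Sec.~2.3 (itself a far-reaching refinement of Nash's method in \cite{Nas56}); I shall only describe the architecture and the bookkeeping of derivative losses, the analytic lemmas being standard. First I would localise: since a PDO and its infinitesimal inversion are local operators and $\cA^d$ is defined by an open differential relation, after trivialising $F$ and $G$ over coordinate charts it suffices to prove the statement for sections of trivial bundles over $\Rn$ (or, to sidestep boundary issues, over $\mathbb T^n$), where one has the usual one-parameter family of smoothing operators $S_\theta$, $\theta\geq1$, satisfying on every H\"older scale $C^{a}$
\begin{equation*}
\|S_\theta u\|_{a}\leq C\,\theta^{\,a-b}\|u\|_{b}\ \ (a\geq b),\qquad \|u-S_\theta u\|_{a}\leq C\,\theta^{\,a-b}\|u\|_{b}\ \ (a\leq b);
\end{equation*}
the return to $M$ equipped with the Whitney strong topology is by a locally finite cover and is routine. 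Fix $\theta_j=\theta_0^{\kappa^{\,j}}$ with a constant $\kappa\in(1,2)$ and $\theta_0$ large, to be chosen.

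Starting from the given smooth section $f_0$, I would define recursively
\begin{equation*}
f_{j+1}\ \bydef\ f_j+\cE_{f_j}\!\big(S_{\theta_j}\eta_j\big),\qquad \eta_j\ \bydef\ g-\cD_r(f_j)+\cD_r(f_0),
\end{equation*}
so $\eta_0=g$ and each $f_j$ is smooth (because $S_{\theta_j}\eta_j$ is, and $\cE$ preserves smoothness); in particular $f_j$ and $S_{\theta_j}\eta_j$ are regular enough for the inversion identity $T_{f_j}\cD_r(\cE_{f_j}(\cdot))=\id$ to apply. A first-order Taylor expansion of $\cD_r$ at $f_j$ then gives the error recursion
\begin{equation*}
\eta_{j+1}=\big(I-S_{\theta_j}\big)\eta_j-Q_j,\qquad Q_j\ \bydef\ \cD_r(f_{j+1})-\cD_r(f_j)-T_{f_j}\cD_r(f_{j+1}-f_j),
\end{equation*}
so that at every step the infinitesimal inverse cancels the linear part of the error exactly, leaving only the smoothing defect $(I-S_{\theta_j})\eta_j$ and the remainder $Q_j$, which is quadratic in the increment $\varphi_j\bydef f_{j+1}-f_j$.

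The scheme then closes by propagating, for all $j$, inductive bounds of the shape $\|\varphi_j\|_{a}\leq C\,\theta_j^{\,a-\ell}$ and $\|\eta_j\|_{a}\leq C\,\theta_j^{\,a-m}$ ($a$ in suitable ranges reaching $\sigma$, resp.\ $\sigma+s$), the constants depending only on $f_0$ and on $\|g\|_{\bar s+s+1}$ being small while $\|g\|_{\sigma+s}$ is merely finite. The three inputs are the smoothing estimates above, which give $(I-S_{\theta_j})\eta_j$ a gain in low norms; a tame estimate for the inversion, $\|\cE_{f_j}(u)\|_{a}\lesssim\|u\|_{a+s}+\|f_j\|_{a+d}\|u\|_{s}$, reflecting that $\cE$ has order $s$ in the data and $d$ in the reference section; and a Moser-type estimate $\|Q_j\|_{a}\lesssim\|\varphi_j\|_{r}\big(\|\varphi_j\|_{a+r}+\|\varphi_j\|_{r}\|f_j\|_{a+r}\big)$ for the quadratic remainder. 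Feeding these into $\eta_{j+1}=(I-S_{\theta_j})\eta_j-Q_j$ and using $1<\kappa<2$, so that the quadratic gain of Newton's step outruns the linear loss, one verifies that the induction is self-improving provided $\theta_0$ is large --- equivalently, provided $g$ lies in a small enough neighbourhood $\cU$ of $0$ in $C^{\bar s+s+1}$, which is the only role of $\cU$. Since then $\sum_j\|\varphi_j\|_{\sigma}<\infty$, the $f_j$ converge in $C^\sigma$ to some $f_\infty$ with $\|\eta_j\|_{0}\to0$, i.e.\ $\cD_r(f_\infty)=\cD_r(f_0)+g$; and since $\|f_j-f_0\|_{C^d}$ stays small, $f_\infty\in\cA^d$ and the solution is admissible.

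The real work, and where I expect the difficulty to concentrate, is the arithmetic of the three superimposed derivative losses that pins down the threshold $\bar s=\max\{d,2r+s\}$: evaluating $\cD_r$ already costs $r$ derivatives, its quadratic remainder $Q_j$ is bilinear in the $r$-jet of $\varphi_j$ and so costs $2r$, while the inversion costs a further $s$ on the data and $d$ on $f_j$. Balancing these against the per-step gain of Newton's method, and choosing the exponents $\ell$, $m$ and the rate $\kappa$ so that the inductive inequalities genuinely improve rather than merely reproduce themselves, is the delicate point; it is precisely what forces $g$ to be controlled in the norm $C^{\bar s+s+1}$ and the output regularity to satisfy $\sigma\geq\bar s+1$ with a net loss of exactly $s$ derivatives. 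Everything else is the standard Nash--Moser toolkit.
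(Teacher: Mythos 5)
The paper does not actually prove this statement: it is quoted verbatim as Gromov's implicit function theorem (the Main Theorem of \cite{Gro86}, Sec.~2.3.2, itself descending from \cite{Nas56}), so there is no in-paper argument to compare yours against. Your sketch reproduces the correct and standard architecture of that proof: the smoothed Newton iteration $f_{j+1}=f_j+\cE_{f_j}(S_{\theta_j}\eta_j)$, and in particular the error recursion $\eta_{j+1}=(I-S_{\theta_j})\eta_j-Q_j$, which follows exactly as you say from the inversion identity applied to the smooth data $S_{\theta_j}\eta_j$.

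That said, as a proof (rather than a proof strategy) the proposal has two genuine gaps. First, the entire quantitative content of the theorem --- the specific thresholds $\bar s=\max\{d,2r+s\}$, the smallness of $g$ in $C^{\bar s+s+1}$, and the output regularity $\sigma\geq\bar s+1$ with loss $s$ --- lives precisely in the inductive estimates you describe but do not carry out; you correctly identify this as the delicate point, but announcing that the exponents ``can be chosen'' so that the induction closes is the theorem, not a proof of it. In the same vein, the hypothesis that each $f_j$ remains in $\cA^d$ must be one of the inductive assumptions (otherwise $\cE_{f_j}$ is not even defined at step $j$), not something verified only for the limit $f_\infty$. Second, the opening reduction is too quick: one cannot literally reduce a nonlinear global problem to trivial bundles over $\Rn$ or $\mathbb T^n$ and then patch solutions, since solutions of $\cD_r(f)=\cD_r(f_0)+g$ on overlapping charts need not agree. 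What the standard argument actually does is keep the iteration global and only localize the \emph{smoothing operators}, constructing $S_\theta$ on $\Gamma(G)$ via a locally finite atlas and partition of unity; for non-compact $M$ with the strong Whitney topology this also requires letting $\theta_0$ (equivalently, the size of the neighborhood $\cU$) vary over the members of the cover. Neither issue is a wrong turn --- both are resolved in \cite{Gro86} --- but both are load-bearing and cannot be dismissed as routine.
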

\begin{corollary}[Gromov, 1986]
  Under the hypotheses of the theorem above, the restriction of $\cD_r$ to $\cA^\infty=\cA^d\cap\Gamma^\infty E$ is an open map.
\end{corollary}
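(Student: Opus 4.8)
The plan is to derive the corollary as a soft consequence of Theorem~\ref{thm:IFT}. Recall that openness of $\cD_r|_{\cA^\infty}$ means: for every $f_0\in\cA^\infty=\cA^d\cap\Gamma^\infty F$ and every neighborhood $U$ of $f_0$ in $\cA^\infty$, the image $\cD_r(U)$ is a neighborhood of $g_0:=\cD_r(f_0)$ in $\Gamma^\infty G$; note that $g_0$ is itself smooth because an order-$r$ PDO carries $C^\infty$ sections to $C^\infty$ sections. So I fix such an $f_0$ and $U$ and look for a neighborhood of $g_0$ inside $\cD_r(U)$.

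First I would apply Theorem~\ref{thm:IFT} at $f_0$ to obtain the neighborhood of zero $\cU\subset\Gamma^{\bar s+s+1}G$, $\bar s=\max\{d,2r+s\}$; shrinking $\cU$ is harmless and I will do it twice. Put $W:=g_0+\cU$ and $V:=W\cap\Gamma^\infty G$. Since the inclusion $\Gamma^\infty G\hookrightarrow\Gamma^{\bar s+s+1}G$ is continuous for the Whitney strong topologies, $V$ is a neighborhood of $g_0$ in $\Gamma^\infty G$. For any $g\in V$ we have $g-g_0\in\cU$, and $g$ is smooth, hence lies in $\Gamma^{\sigma+s}G$ for every $\sigma$; so Theorem~\ref{thm:IFT}, applied with perturbation $g-g_0$, yields for each finite $\sigma\geq\bar s+1$ a $C^\sigma$ solution of $\cD_r(f)=g$.

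The one point that requires genuine work is promoting these finite-regularity solutions to a single smooth solution lying in $U$. For smoothness I would use that the solution furnished by Theorem~\ref{thm:IFT} is produced by a Nash--Moser iteration issued from $f_0$: when the data $g$ is $C^\infty$ one can tune the smoothing parameters of that scheme so that the iterates are Cauchy in every $C^k$-norm, so the limit $f$ is $C^\infty$ (equivalently, one runs the scheme once and checks $f\in\bigcap_\sigma C^\sigma$). The same construction also keeps all $\|f-f_0\|_{C^k}$ as small as desired provided $\cU$ was taken small enough; shrinking $\cU$ a second time we get $f\in U$, using here that $\cA=\cA^d$ is cut out by an \emph{open} differential relation and $f_0\in\cA$, so a $C^d$-small perturbation of $f_0$ remains in $\cA$ and, being also smooth, remains in $\cA^\infty$. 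Such an $f$ satisfies $f\in U$ and $\cD_r(f)=g$, so $V\subset\cD_r(U)$, which proves openness. I expect the real obstacle to be exactly this regularity-and-smallness bookkeeping of the iteration, since Theorem~\ref{thm:IFT} is stated purely as a finite-regularity existence statement; everything else is point-set topology on Whitney spaces.
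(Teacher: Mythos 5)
The paper does not prove this corollary; it is quoted from Gromov, so there is no in-text proof to compare against. Your proposed derivation is the natural one, and you correctly isolate the real difficulty: the statement of Theorem~\ref{thm:IFT} as reproduced in the paper only asserts, for each finite $\sigma\geq\bar s+1$, the \emph{existence} of some $C^\sigma$ solution of $\cD(f)=\cD(f_0)+g$; it says nothing about (a) those solutions for different $\sigma$ being the same map, (b) the solution being $C^\infty$ when $g$ is $C^\infty$, or (c) the solution being $C^\infty$-close to $f_0$. All three are needed for openness of $\cD_r|_{\cA^\infty}$ in the Whitney $C^\infty$ topology, and none of them follows by point-set manipulations from the quoted statement; your ``tune the smoothing parameters of the Nash--Moser scheme'' step is where the entire content lies, and as you say yourself it amounts to invoking the \emph{proof} of Theorem~\ref{thm:IFT} rather than its statement. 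This is not wrong --- Gromov's Main Theorem in Sec.~2.3.2 of \cite{Gro86} is stated with precisely the additional clauses (smoothness of the solution for smooth data, and continuity/smallness of the solution operator) that make the corollary immediate --- but as written your argument is a proof from that fuller version, not from the abbreviated Theorem~\ref{thm:IFT}. If you want the corollary to be self-contained you should either cite the full form of Gromov's IFT (with the approximation and regularity clauses) or at least record those two clauses as part of the theorem you are invoking; the remaining steps (restricting $\cU$ to smooth sections, using that $\cA^d$ is cut out by an open differential relation so a $C^d$-small smooth perturbation of $f_0$ stays in $\cA^\infty$) are correct and routine. One small typographical note: the corollary in the paper writes $\cA^\infty=\cA^d\cap\Gamma^\infty E$, which should read $\cA^d\cap\Gamma^\infty F$ since $\cA^d\subset\Gamma^d F$; you silently corrected this.
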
 
In case of the metric inducing operator $\cD_{M,q}$, this implies the following celebrated results of Nash:
\begin{theorem}[Nash, 1956]
  If $f_0\in Free^\infty(M,\Rq)$ and $g_0=\cD_{M,q}(f_0)$, then the $C^\sigma$ metric $g_0+g$ can be induced by 
  a $C^\sigma$ immersion of $M$ into $\Rq$ for all $g$ that are ``$C^\sigma$-small enough'' and every $\sigma\geq3$. 
\end{theorem}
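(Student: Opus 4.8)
The plan is to obtain this theorem as a direct corollary of Gromov's Implicit Function Theorem (Theorem~\ref{thm:IFT}), by verifying that the metric inducing operator meets its hypotheses over the set of free maps. The first step is to fix the data of Theorem~\ref{thm:IFT}: the fiber bundle is the trivial bundle $F=M\times\Rq$, the vector bundle is the bundle $G=S^0_2(M)$ of symmetric $(0,2)$-tensors, the operator $\cD_r$ is $\cD_{M,q}$, of order $r=1$ since~(\ref{eq:nonlin}) involves only first derivatives of $f$, and the relevant open set is $\cA=\cA^2$, the $C^2$ free maps, cut out by the open differential relation $\rk D^2f=n+s_n$ (so that $\cA\cap C^\infty(M,\Rq)=\Free^\infty(M,\Rq)$).

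The second step is to pin down the infinitesimal inversion that the paper attributes to Nash's trick. By the product rule the linearized equation~(\ref{eq:lin}) is equivalent to the algebraic system~(\ref{eq:NashTrick}); setting the auxiliary functions $h_\alpha$ to zero turns it into $D^2f_0\cdot\delta f=\bigl(0,\,-(\delta g_0)_{\alpha\beta}\bigr)$, where $D^2f_0$ is the $(n+s_n)\times q$ matrix of the first and second partial derivatives of $f_0$. Since $f_0$ is free this matrix is surjective at every point, so I would define $\cE_{f_0}(\delta g)$ by applying to the right-hand side the Moore--Penrose right inverse $(D^2f_0)^{T}\bigl((D^2f_0)(D^2f_0)^{T}\bigr)^{-1}$, which is well defined and depends rationally --- in particular smoothly --- on the first and second derivatives of $f_0$ exactly over the open set of free maps. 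It then remains to check the three clauses of the definition of infinitesimal invertibility, and each is immediate: the defect is $d=2\ge1$ with $\cA=\cA^2\subset C^2(M,\Rq)$ open; $\cE(f,\eta)=\cE_f(\eta)$ is a PDO of order $d=2$ in $f$ and order $s=0$ in $\eta$; and $T_f\cD_{M,q}(\cE_f(\delta g))=\delta g$ by construction, since $\cE_f(\delta g)$ solves~(\ref{eq:NashTrick}) and hence~(\ref{eq:lin}).

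Finally, I would invoke Theorem~\ref{thm:IFT} with $r=1$, $s=0$, $d=2$, whence $\bar s=\max\{d,2r+s\}=2$: for any $f_0\in\Free^\infty(M,\Rq)$ it yields a neighborhood of zero $\cU\subset\Gamma^{3}(S^0_2M)$ such that every $C^\sigma$ tensor $g\in\cU$ with $\sigma\geq\bar s+1=3$ satisfies $\cD_{M,q}(f)=\cD_{M,q}(f_0)+g=g_0+g$ for some $C^\sigma$ map $f$. Since the solution furnished by the theorem is $C^1$-close to $f_0$ (after shrinking $\cU$ if necessary) and $f_0$ is an immersion, $f$ is again an immersion, so $g_0+g=f^*e_q=\cD_{M,q}(f)$ is induced by the $C^\sigma$ immersion $f$, which is the assertion. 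The only step calling for genuine attention rather than bookkeeping is the construction of $\cE_f$: one must choose a right inverse of the surjective but non-square matrix $D^2f_0$ that depends smoothly on $f_0$ throughout $\cA$, which is precisely what the pseudoinverse formula supplies; the rest reduces to tracking the regularity indices $r,s,d,\bar s,\sigma$ through Theorem~\ref{thm:IFT}.
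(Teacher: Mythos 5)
Your proposal is correct and follows exactly the route the paper takes: the theorem is stated as an immediate consequence of Gromov's Implicit Function Theorem (Theorem~\ref{thm:IFT}) applied to $\cD_{M,q}$, whose infinitesimal inversion of order $s=0$ and defect $d=2$ over free maps is precisely the Nash trick encoded in system~(\ref{eq:NashTrick}). You have simply made explicit what the paper leaves implicit --- the identification $r=1$, the Moore--Penrose construction of the algebraic inverse, and the bookkeeping $\bar s=\max\{2,2\}=2$, $\sigma\geq\bar s+1=3$ --- so this is the same argument, not a different one.
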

Clearly this result is void when $Free^\infty(M,\Rq)$ is empty, in particular for $q<n+s_n$. Nevertheless, in the interval 
$n+s_n>q>s_n$, although no free map can arise, there can still be maps whose 2-rank is full at every point.
\begin{example}
  \label{ex:frEuc}
  Recall that the maps $f_n:\R^n\to\R^{n+s_n}$ defined by $$f_n(x^1,\dots,x^n)=(x^1,\dots,x^n,(x^1)^2,x^1x^2,\dots,(x^n)^2)$$
  are free for all $n=1,2,\dots$. The maps $f_{n,m}:\R^n\to\R^{n+s_n-m}$, $m=1,\dots,n$, obtained by composing $f_n$ with any projection $\pi_{n,m}:\R^{n+s_n}\to\R^{n+s_n-m}$
  that ``drops'' any $m$ of the components of index larger than $n$ are all maps of full 2-rank.
\end{example}
\begin{example}
  \label{ex:frTor}
  The maps $f_n:\bT^n\to\bR^{2n}$ defined by
  $$
  f_n(x^1,\dots,x^n)=(\sin x^1,\cos x^1,\dots,\sin x^n,\cos x^n)
  $$
  are all full 2-rank maps, with $m=s_n-n$. In particular there are full 2-rank maps from $\bT^2$ to $\R^4$. 
  We recall that, on the contrary, it is still unknown whether there exist free maps from $\bT^2$ to $\R^5$ (see~\cite{Gro15}, Sec.~2.2). 
\end{example}
Now let us set $m=n+s_n-q$ and let $f_0$ be a full 2-rank map $M\to\R^q$. Then, among the $n+s_n$ vectors $(\partial_\alpha f_0,\partial^2_{\alpha\beta}f_0)$,
$\alpha\leq\beta$, at every point $x\in M$ hold exactly $m$ non-trivial linear relations
$$
\lambda^\alpha_a(f_0,x) \partial_\alpha f_0(x) + 
\sum_{\alpha\leq\beta}\lambda^{\alpha\beta}_a(f_0,x)\partial^2_{\alpha\beta}f_0(x)=0\,,\;\;\;a=1,\dots,m,
%\sum_{\alpha=1}^n\lambda^\alpha_a(f_0,x) \partial_\alpha f_0(x) + 
%\frac{1}{2}\sum_{\substack{\alpha,\beta=1}}^n\lambda^{\alpha\beta}_a(f_0,x)\partial^2_{\alpha\beta}f_0(x)=0\,,\;\;\;a=1,\dots,m,
$$
where the $m(n+s_n)$ coefficients $(\lambda_a^\alpha(f_0),\lambda^{\alpha\beta}_a(f_0))$, $\alpha\leq\beta$, 
can be chosen as polynomial in the components of $(\partial_\alpha f_0,\partial^2_{\alpha\beta}f_0)$.
% and the matrices $(\lambda^{\alpha\beta}_a(f_0))$ are symmetric for every $a=1,\dots,m$.
%where the functions $(\lambda^\alpha_a,\lambda^{\alpha\beta}_a)$ are polynomials in the components of
%$(\partial_\alpha f,\partial^2_{\alpha\beta}f)$.

A solution of system~(\ref{eq:NashTrick}) therefore exists if and only if the same relation holds on the right hand side
of the system, namely if the PDE
\begin{equation}
  \label{eq:Lf(h)=g}
%  \sum_{\alpha=1}^n\lambda^\alpha_a(f_0)\, h_\alpha + \sum_{\alpha,\beta=1}^n\lambda^{\alpha\beta}_a(f_0)\, \partial_\beta h_\alpha 
%  = 
%  \frac{1}{2}\sum_{\alpha,\beta=1}^n\lambda^{\alpha\beta}_a(f_0)\,(\delta g_0)_{\alpha\beta}.
  \lambda^\alpha_a(f_0)\, h_\alpha + \frac{1}{2}\sum_{\alpha\leq\beta}\lambda^{\alpha\beta}_a(f_0)(\partial_\beta h_\alpha+\partial_\alpha h_\beta)
  = 
  \frac{1}{2}\sum_{\alpha\leq\beta}\lambda^{\alpha\beta}_a(f_0)\,(\delta g_0)_{\alpha\beta}.
\end{equation}
in the $n$ unknowns $h_\alpha$ does admit a solution. Equivalently, a sufficient condition for the solvability of system~(\ref{eq:NashTrick})
is the surjectivity of the {\em compatibility 1-st order linear PDO} $\cL_{f}:C^1(M,\R^n)\to C^0(M,\R^m)$ defined by
$$
\cL_{f}(h_1,\dots,h_n) = 
\lambda^\alpha_a(f)\,h_\alpha + \hat\lambda^{\alpha\beta}_a(f)\,\partial_\beta h_\alpha,
%\left(\sum_{\alpha=1}^n\lambda^\alpha_a(f)\,h_\alpha + \sum_{\alpha,\beta=1}^n\hat\lambda^{\alpha\beta}_a(f)\,\partial_\beta h_\alpha\right),
$$
where $\hat\lambda^{\alpha\alpha}_a=\lambda^{\alpha\alpha}_a$ and $\hat\lambda^{\alpha\beta}_a=\hat\lambda^{\beta\alpha}_a=\frac{1}{2}\lambda^{\alpha\beta}_a$ for $\alpha<\beta$.

In the next sections we study general conditions that ensure the surjectivity of this class of PDOs.
%In the next sections we study general conditions that ensure the solvability of this equation.
%
\section{Surjectivity of generic underdetermined PDOs}
In~\cite{Gro86}, Sec.~2.3.8, Gromov generalizes Nash's technique of purely algebraic inversion of the linear PDO $T_f\cD_{M,\Rq}$ 
to any underdeterminate linear PDO. The price of this generalization is that the order of the inverse is 
usually far from being zero. 

In order to state the main results, we need first to introduce a series of definitions and notations. 
Let $F\to E$ and $G\to E$ be two vector bundles over $E$ with fiber dimension equal to $q$ and $q'$ respectively.
We denote by $\Gamma^r F$ the set of its $C^r$ sections and by $J^rF$ the affine bundle of the $r$-jets of its sections,
and similarly for $G$.
\begin{definition}
  A linear PDO of order $r$ on $F$ with values in $G$ is given by either of these equivalent definitions:
  \begin{enumerate}
  \item a linear map $\cL_r:\Gamma^rF\to\Gamma^0G$ given, in coordinates, by
    $$
    \cL_r(f) = \left(\sum_{|A|\leq r}\Lambda^{aA}_i\partial_Af^i\right)
    =\left(\Lambda^{a}_if^i+\Lambda^{a\alpha}_i\partial_\alpha f^i+\dots+\Lambda^{a\alpha_1\dots\alpha_r}_i\partial_{\alpha_1\dots\alpha_r}f^i\right)
    $$
  \item a smooth section $\Lambda_r$ of the bundle $Hom(J^rF,G)\to E$ of all linear homomorphisms between $J^rF$ and $G$, namely 
    $$
    \Lambda_r(x)=\left(x,\Lambda^{a}_i(x),\dots,\Lambda^{a\alpha_1\dots\alpha_r}_i(x)\right)
    $$
  \end{enumerate}
  The relation between the two definitions is that $(j^rf)^*\Lambda_r = \cL_r(f)$ for every $f\in\Gamma^rF$.
\end{definition}
\begin{example}
  \label{ex:Lxi}
  The simplest linear PDOs over a manifold $M$ are associated to vector fields 
  $\xi\in\Gamma^\infty(TM)$, i.e. a smooth sections of the tangent bundle 
  $\bundle{\tau_M}{TM}{M}$. We denote the corresponding first-order linear 
  homogeneous PDOs by $L_\xi:C^1(M)\to C^0(M)$ (Lie derivative in the $\xi$ direction),
  namely here $E=M$ and $F=G=M\times\bR$.
  In coordinates $L_\xi=\xi^\alpha\partial_\alpha$ and the corresponding map 
%  $\Lambda_\xi:J^1(M,\bR)\to M\times\bR$ is defined as $\Lambda_\xi(x^\beta,f,f_\beta)=(x^\beta,\xi^\alpha(x^\beta)f_\alpha)$,
  $\Lambda_\xi:J^1(M,\bR)\to \bR$ is defined as $\Lambda_\xi(x^\beta,f,f_\beta)=\xi^\alpha(x^\beta)f_\alpha$,
  namely $(\Lambda_\xi)^1_1=0\,,\;(\Lambda_\xi)^{1\alpha}_1=\xi^\alpha$.
  The corresponding PDE $L_\xi(f)=g$ is called {\em cohomological equation}, whose solvability has been
  recently studied in several contexts (see~\cite{For99,Nov08,DeL15c}).
\end{example}
\begin{remark}
  It was shown in~\cite{DeL11} that, in case of vector fields with no zeros on $\R^2$, $L_\xi$ is surjective
  if and only if $\xi$ is conjugate to a constant vector field (recall that on the 2-torus even this condition is not enough
  for the surjectivity, e.g. see\cite{For95}). On the contrary, in~\cite{Gro86}, Sec 2.3.8~(C), Gromov
  has shown that, for any manifold $M$, the operator $(L_{\xi_1},L_{\xi_2}):C^1(M,\R^2)\to C^0(M)$ defined by 
  $(L_{\xi_1},L_{\xi_2})(f_1,f_2)=L_{\xi_1}(f_1)+L_{\xi_2}(f_2)$ is surjective for any pair of vector fields $\xi_1,\xi_2\in\Gamma^\infty(TM)$ 
  in general position. In other words, topology affects the solvability of linear first order PDEs only 
  in the equal dimension case $q=q'$
\end{remark}
Now consider an open subset $\cH\subset J^{r+s}\left(Hom(J^rF,G)\right)$ for some $s\geq0$. 
\begin{definition}
  A $\cH$-universal right inverse for a linear PDO $\cL_r$ of order $r$ such that $\Lambda_r(E)\subset\cH$
  is a PDO $\cM_s:\Gamma^\infty\cH\times\Gamma^\infty G\to\Gamma^\infty F$
  of order $s+r$ in the first component and linear of order $s$ in the second such that
  $$
  \cL_r(\cM_s(\cL_r,g))=g\,,\;\hbox{for all}\;g\in\Gamma^\infty G.
  $$
\end{definition}
\begin{example}
  \label{ex:large}
  Let $\xi_1,\dots,\xi_q\in\Gamma^\infty(TM)$, $f\in C^1(M,\R^q)$, $q>n$, and define the linear PDO 
  $\Xi:C^1(M,\R^q)\to C^0(M)$ as $\Xi(f)=L_{\xi_1}(f^1)+\dots+L_{\xi_q}(f^q)$, where $L_\xi(f)$ is the Lie derivative 
  of $f$ with respect to $\xi$. 
  We say that $\Xi$ is 
%the linear PDO $\Xi:C^1(M^q)\to C^0(M)$
%  , defined by 
%  $\Xi(f^1,\dots,f^q)=\xi_1(f^1)+\dots+\xi_q(f^q)$, \emph{large} if the $q\times(n+1)$ matrix 
%  $\Xi(f)=\xi_1(f^1)+\dots+\xi_q(f^q)$, 
  \emph{large} if the $q\times(n+1)$ matrix 
%  $(\xi_i^\alpha\;\partial_\beta\xi^\beta_i)$
  \begin{equation}
%    A_\Xi=
    \begin{pmatrix}
      \xi^1_1&\cdots&\xi^n_1&-\partial_\alpha\xi^\alpha_1\cr
      \vdots&\vdots&\vdots&\vdots\cr
      \xi^1_q&\cdots&\xi^n_q&-\partial_\alpha\xi^\alpha_q\cr
    \end{pmatrix}
  \end{equation}
  has full rank at every point in any (and so every) coordinate system. Now, let $\cH$ be the open subbundle of 
%  $J^1\left(Hom(J^1(M^q,\R),\R)\right)$ spanned by the 1-jets of sections of large PDOs on $M^q$ and let
  $J^1\left(Hom(J^1(M,\R^q),J^0(M,\R))\right)$ spanned by the 1-jets of sections of large PDOs on $C^1(M,\R^q)$ and let
  $(\lambda^1(\Xi),\dots,\lambda^q(\Xi))$ be the solution of the system
  $\lambda^i\xi_i^\alpha=0,\lambda^i\partial_\alpha\xi_i^\alpha=-1$ closest to the origin. Then the PDO
%  $A_\Xi(\lambda^1(\Xi),\dots,\lambda^q(\Xi))=(0,\dots,0,-1)$ closest to the origin. Then the PDO
  $\cM_0:\Gamma^\infty\cH\times C^\infty(M)\to C^\infty(M,\R^q)$ defined by $\cM_0(\Xi,g)=(\lambda^i(\Xi)g)$
  is a $\cH$-universal right inverse for $\Xi$ of order 0 since it depends only algebraically on $g$ and
  $\Xi(\lambda^i(\Xi)g)= \xi_i(\lambda^i(\Xi)g)=\xi_i^\alpha\partial_\alpha\lambda^i(\Xi)g +\lambda^i(\Xi)\xi_i^\alpha\partial_\alpha g=g$.
\end{example}
\begin{example}
  The technique of Nash to solve the PDE system~(\ref{eq:lin}) through the algebraic system~(\ref{eq:NashTrick})
%  The calculations in Example~\ref{ex:Nash} 
  shows that the linearization of the isometric operator $T_f\cD_{M,q}$ admits a 0-order inverse
%  given by the operator 
  $\cM_0:\Gamma^\infty\cH\times \Gamma^\infty(S^0_2M)\to C^\infty(M,\R^q)$, 
  where $\cH$ is the subbundle of $J^1(Hom(J^1(M,\R^q),J^0(M,\R)))$
  spanned by the 1-jets of the operators $T_{f}\cD_{M,q}$ with $f$ free.
%  $J^1(Hom(J^1(M,\R^q),M\times\R))$ whose sections are free maps, 
  In this case, indeed, the matrix of first and second partial derivatives of $f$ is {\em surjective} and
  the value of $\cM_0(f,\delta g)$ can be defined, for example, as the solution of~(\ref{eq:NashTrick}) closer 
  to the origin. Note that for $f$ free the $h_\alpha$ play no role and we can set all of them to zero.
%  for all $\alpha$.
%  This way, by definition, $T_{f}\cD_{M,q}(\cM_0(f,\delta g))=\delta g$.
%  Nash's inversion of the linearization of the isometric operator ... is given by ...
\end{example}
%
%The condition $\cL_r\circ\cM_s=\id_{\Gamma^\infty G}$ for the existence of a universal right inverse a generic linear PDO $\cL_r$ is a linear 
%PDE system of order $r$ in the coefficients of $\cM_s$. Gromov's key idea to prove the existence of a solution to such PDE, generalizing
%Nash's way of solving (\ref{eq:lin}) through (\ref{eq:NashTrick}), is to use formal adjoints to transform this PDE into an algebraic systems 
%of equations.
%
\begin{definition}
  The {\em formal adjoint} of a linear PDO $\cL_r:\Gamma^rF\to\Gamma^0G$ of order $r$ is the linear PDO 
  $\overline{\cL_r}:\Gamma^rG\to\Gamma^0F$ of order $r$ defined by
  $$
  \overline{\cL_r}(g) = \left(\sum_{|A|\leq r} \overline{\Lambda}^{iA}_a\partial_A g^a\right) \bydef \left(\sum_{|A|\leq r}(-1)^{|A|}\partial_A\left(\left(\overline{\Lambda^i_a}\right)^{A}g^a\right)\right),
  $$
  where, for every multi-index $A$, the matrix $\left(\overline{\Lambda^i_a}\right)^A$ is the transpose of the matrix $(\Lambda^{aA}_i)$.
%  where, for every multi-index $A$, the matrix $(\overline{\Lambda}^{iA}_a)$ is the transpose of the matrix $(\Lambda^{aA}_i)$.
\end{definition}
\begin{example}
  The formal adjoint of the linear first-order PDO $\Xi(f^1,\dots,f^q)=\xi^{\alpha}_i\partial_\alpha f^i$ in Example~\ref{ex:large}
  is given by $\overline{\Xi}(g) = -\partial_\alpha\left(\xi^\alpha_i g\right) = -(\partial_\alpha\xi_i^\alpha)g - \xi^\alpha_i \partial_\alpha g$.
\end{example}
Note that the adjunction itself is a PDO $J^r(Hom(J^rF,G))\to Hom(J^rG,F)$ that satisfies $\overline{\overline{\cL_r}}=\cL_r$ and
$\overline{\cL_r\cM_s} = \overline{\cM_s}\;\overline{\cL_r}$.
%A direct calculation shows that the adjunction is an idempotent antihomomorphism of $Hom(J^rF,G)$ into $Hom(J^rG,F)$.
Gromov's crucial observation, generalizing Nash's method of solving (\ref{eq:lin}) through (\ref{eq:NashTrick}), 
is that, while the problem of finding an $\cH$-universal right inverse 
$\cM_s:\Gamma^\infty\cH\times\Gamma^\infty G\to\Gamma^\infty F$ for $\cL_r:\Gamma^r F\to\Gamma^0 G$ 
requires studying the solvability of a (usually non-trivial) linear PDE of order $r$ in the components of $\cM_s$, namely
\begin{equation}
  \label{eq:LM}
  \sum_{|A|\leq r}\Lambda^{aA}_i\partial_A\left(\sum_{|B|\leq s}M^{iB}_b\partial_B\right) = \delta^a_b,
\end{equation}
the equivalent problem of findind a {\em left} $\cH$-universal inverse $\overline{\cM_s}:\Gamma^\infty\cH\times\Gamma^\infty F\to\Gamma^\infty G$ 
for the adjoint PDO $\overline{\cL_r}:\Gamma^r G\to\Gamma^0 F$, namely 
\begin{equation}
  \label{eq:ML}
%  \sum_{|B|\leq s}M^{aB}_i\partial_B\left(\sum_{|A|\leq r}(-1)^{|A|}\partial_A\left(\overline{\Lambda}^{iA}_ag^a\right)\right) = \delta^a_b,
  \sum_{|B|\leq s}\overline{M}^{aB}_i\partial_B\left(\sum_{|A|\leq r}\overline{\Lambda}^{iA}_b\partial_A\right) = \delta^a_b,
\end{equation}
involves just solving a purely {\em algebraic} linear system in the $s$-jets of $\overline{\Lambda}_r$, namely in the $(s+r)$-jets of $\Lambda_r$, 
and therefore it can be solved by just combinatorial and transversality arguments. Following this idea, in~\cite{Gro86} Gromov was able
to prove the following:
\begin{theorem}[Gromov, 1986]
  \label{thm:linPDOs}
  If $q>q'$, a generic linear PDO $\cL_r:\Gamma^r F\to\Gamma^0 G$ is surjective.
\end{theorem}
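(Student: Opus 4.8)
The plan is to exploit the adjunction duality emphasized in the discussion preceding the statement: rather than constructing a right inverse $\cM_s$ for $\cL_r$ directly (which would require solving the genuinely nontrivial PDE system~(\ref{eq:LM})), I would build a \emph{left} universal inverse $\overline{\cM_s}$ for the adjoint $\overline{\cL_r}$, which by~(\ref{eq:ML}) amounts only to solving an algebraic linear system in the $(s+r)$-jets of the coefficients $\Lambda_r$. Concretely, fix a point $x\in E$ and work in a chart. Expanding the composition $\overline{\cM_s}\circ\overline{\cL_r}$ by the Leibniz rule and collecting terms, the requirement that it equal the identity operator of order $0$ (so that all coefficients of genuine derivatives vanish and the order-zero part is $\delta^a_b$) becomes, for each multi-index $C$ with $|C|\le s+r$, a linear equation in the unknown coefficients $M^{iB}_b$ (equivalently $\overline{M}{}^{aB}_i$) whose data are the partial derivatives $\partial_D\Lambda^i_a$ with $|D|\le |C|$. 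Counting: the number of unknowns is (fiber dim $q$) times (number of $B$ with $|B|\le s$), which grows like $q\binom{n+s}{n}$, while the number of equations grows like $q'\binom{n+s+r}{n}$. Since $q>q'$, choosing $s$ large enough makes the unknowns outnumber the equations, so for a \emph{generic} jet $\Lambda_r$ — i.e.\ one lying in a suitable open dense subbundle $\cH\subset J^{r+s}(Hom(J^rF,G))$ where the relevant coefficient matrix has maximal rank — the algebraic system is solvable, and its solution closest to the origin depends polynomially (hence smoothly) on the $(s+r)$-jet of $\Lambda_r$. This produces the $\cH$-universal left inverse $\overline{\cM_s}$ of $\overline{\cL_r}$, and taking adjoints and using $\overline{\overline{\cL_r}}=\cL_r$, $\overline{\cL_r\cM_s}=\overline{\cM_s}\,\overline{\cL_r}$ yields the $\cH$-universal right inverse $\cM_s$ of $\cL_r$ on sections over any contractible chart.

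The next step is to pass from the local, chart-wise construction to a global operator. Here I would invoke a partition of unity on $E$ subordinate to a cover by charts: on each chart we have a right inverse $\cM_s^{(\nu)}$ with $\cL_r\cM_s^{(\nu)}g=g$, and patching them with a partition of unity $\{\rho_\nu\}$ gives $\cM_s g=\sum_\nu \cM_s^{(\nu)}(\rho_\nu g)$, which still satisfies $\cL_r\cM_s g=\sum_\nu\rho_\nu g=g$ by linearity of $\cL_r$; smoothness and the order bounds are preserved. The genericity statement is then the assertion that the set of sections $\Lambda_r$ of $Hom(J^rF,G)$ whose jet extension $j^{r+s}\Lambda_r$ avoids the (closed, positive-codimension) degeneracy locus where the algebraic rank drops is open and dense in $\Gamma^\infty$, which follows from Thom's transversality theorem once one checks that the degeneracy locus is a stratified subvariety of the jet bundle of codimension $\ge 1$ — this is a standard determinantal-variety dimension count given $q>q'$.

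I expect the main obstacle to be the bookkeeping that makes the heuristic dimension count rigorous: one must show that the specific coefficient matrix arising from~(\ref{eq:ML}) — whose entries are particular linear combinations of the $\partial_D\Lambda^i_a$ — actually achieves its maximal possible rank on a dense set, not merely that there are ``enough unknowns.'' In other words, one needs that the map $j^{r+s}\Lambda_r\mapsto(\text{that matrix})$ is itself a submersion onto a space where full-rank matrices are dense, so that transversality applies; this requires identifying, among the high-order jet coordinates, a subset that can be freely prescribed to fill out the matrix. Once that surjectivity-of-the-symbol-type lemma is established, the rest — choosing $s$, invoking transversality, patching with a partition of unity, and dualizing back via the adjoint — is routine. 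I would also need to record the precise value of $s$ (the smallest $s$ with $q\binom{n+s}{n}>q'\binom{n+s+r}{n}$, up to the correction for the rank conditions), since the order of the resulting inverse, and hence the defect bound fed into Theorem~\ref{thm:IFT}, depends on it.
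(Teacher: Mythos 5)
Your high-level strategy — dualize to the algebraic system~(\ref{eq:ML}) in the $(s+r)$-jets of $\Lambda_r$, count unknowns against equations, pick $s$ large, and invoke jet transversality — is indeed the route Gromov takes in~\cite{Gro86}, Sec.~2.3.8, and the one the paper adapts for the UTS case. However, there is a genuine gap in the transversality step. You assert that a generic section avoids the degeneracy locus once that locus has codimension~$\geq 1$. That is false. Thom's jet transversality theorem makes a generic jet extension $j^{r+s}\Lambda_r : E\to J^{r+s}\left(Hom(J^rF,G)\right)$ merely \emph{transversal} to the locus; since the source $E$ is $n$-dimensional, transversality is equivalent to avoidance only when the codimension of the locus is strictly greater than $n$. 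If the codimension is small, a generic jet extension still hits the locus along a positive-dimensional subset of $E$, and the algebraic inverse fails there. This is precisely why Theorem~\ref{thm:UTSrel} insists that the complement of $\cH^{r,s}_m(M)$ have codimension \emph{larger than $n$}, and why a lower bound $\bar s \geq n/((1+1/m)^{1/r}-1)$ on $s$ appears: the codimension must be driven above $n$ by taking $s$ large, and proving that it actually grows requires the structural analysis of~(\ref{eq:ML}) — its triangular form, the fact that each zero-order coefficient $\overline{\Lambda}{}^\alpha_{bA}$ appears exactly once and in the inhomogeneous row, and the exclusion of low-codimension characteristic strata (Lemma~\ref{lem:UTS}) — that you set aside as ``bookkeeping.'' That bookkeeping is the heart of the proof; a naive determinantal-variety count does not apply because the coefficient matrix arising from~(\ref{eq:ML}) is not a generic matrix — its entries are specific linear forms in the jet coordinates — so one must show that the rank-drop locus, inside that constrained family, nonetheless has large codimension.

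Two smaller points. The minimal-norm solution of the algebraic system depends \emph{rationally}, not polynomially, on the jet of $\Lambda_r$; this is why the paper defines $\cH$ as the complement of the zero set of a product of denominators. And the partition-of-unity patching is unnecessary: the universal inverse is by construction a fiberwise algebraic formula on the jet bundle, hence already a globally defined PDO $\cM_s : \Gamma^\infty\cH\times\Gamma^\infty G\to\Gamma^\infty F$; introducing chart-by-chart gluing obscures precisely the ``universality'' of the right inverse that makes it a PDO of bounded order in the coefficients and is essential for the downstream application via Theorem~\ref{thm:IFT}.
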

%
%Note, moreover, that the existence of a universal right inverse for $\cL_r$ is essentially local:
%%
%\begin{proposition}
%  \label{prop:local}
%  If 
%\end{proposition}
%%%
%
%proves that a generic underdetermined linear PDO is surjective. 
%
\section{Surjectivity of Upper Totally Symmetric PDOs}% $\cL_f$}%Formal Solvability}
%\section{Surjectivity of the compatibility operators $\cL_f$}%Formal Solvability}
%\section{Surjectivity of weakly generic upper totally symmetric PDOs}%Formal Solvability}
%
As it often happens for very general theorems, the proof of Theorem~\ref{thm:linPDOs} can be used, with minor changes, 
to prove several particular cases not covered by the general statement. In this section we adapt Gromov's proof to a class 
of PDOs that plays a crucial role in the proof of our main theorem.
\begin{definition}
  \label{def:UTS}
  We call a linear PDO $\cL_r:C^r(M,\R^{m+1})\to C^0(M,\R^m)$, with
  $$\cL_r(h_1,\dots,h_{m+1})=\left(\Lambda^{\alpha}_a h_\alpha+\Lambda^{\alpha\beta_1}_a\partial_{\beta_1} h_\alpha+\dots
  +\Lambda^{\alpha\beta_1\dots\beta_r}_a\partial_{\beta_1\dots\beta_r}h_\alpha\right)\,,$$
  \emph{upper totally symmetric} (UTS) if its highest order components $\Lambda^{\alpha\beta_1\dots\beta_r}_a$ are symmetric 
  with respect to all permutations of the indices $\alpha\beta_1\dots\beta_r$ that keep the first one not larger than 
  $m+1$. We denote by $\cU^r_m(M)$ the subbundle of $Hom(J^r(M,\R^{m+1}),J^0(M,\R^m))$ 
  whose fibers are spanned, at every $x\in M$, by the images of germs of UTS PDOs about $x$.
%the subbundle spanned by the images of all UTS PDOs $\Lambda_r:M\to Hom(J^r(M,\R^{m+1}),J^0(M,\R^m))$ . 
%  with respect to \emph{all} permutations $(\sigma_1,\dots,\sigma_{r+1})$ of the indices $\alpha\beta_1\dots\beta_r$ that are 
%  compatible with the indices ranges, namely $\sigma_\ell<m+1$ for $\ell>1$. 
%whenever it makes sense.
\end{definition}  
Note that, in a UTS PDO, the dimension of the set of terms or highest order decreases from $m(m+1){n\choose r}$ to $m\sum_{i=0}^m {n-i\choose r}$.

We first show, with a purely combinatorial argument, that $\cL_r$ does admit a {\em formal} universal right inverse: % over some $\cH\subset J^{r+s}(J^r(M,\R^{m+1}),J^0(M,\R^m))$:
\begin{lemma}
%  There exists an open dense {\em algebraic} subbundle $\cH\subset J^{r+s}(Hom(J^r(M,\R^{m+1}),J^0(M,\R^m)))$ such that every UTS PDO
  There exists a $s_{n,m}$ such that, for all $s\geq s_{n,m}$, it exists an open dense {\em algebraic} subbundle $\cH^{r,s}_m(M)\subset J^{r+s}\cU^{r}_m(M)$ 
  with the property that every UTS PDO $\Lambda_r:M\to Hom(J^r(M,\R^{m+1}),J^0(M,\R^m))$ satisfying $j^{r+s}\Lambda_r(M)\subset\cH^{r,s}_m(M)$ 
  admits a $\cH^{r,s}_m(M)$-universal right inverse.
\end{lemma}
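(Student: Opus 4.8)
The plan is to follow the Gromov argument for Theorem~\ref{thm:linPDOs} (surjectivity of generic undetermined linear PDOs) essentially verbatim, tracking carefully how the ``upper totally symmetric'' constraint restricts the relevant jet coordinates, and then reducing, via the adjunction identity $\overline{\cL_r\cM_s}=\overline{\cM_s}\,\overline{\cL_r}$, the construction of a right inverse $\cM_s$ for $\cL_r$ to the construction of a \emph{left} inverse $\overline{\cM_s}$ for the adjoint $\overline{\cL_r}$. As noted in the excerpt, solving $\overline{\cM_s}\,\overline{\cL_r}=\mathrm{id}$, i.e. equation~(\ref{eq:ML}), amounts to an \emph{algebraic} linear system whose unknowns are the $s$-jet coefficients $\overline{M}^{aB}_i$ and whose data are the $(s+r)$-jet coefficients of $\Lambda_r$ (equivalently of $\overline{\Lambda}_r$); so the whole problem becomes: for $s$ large, the generic point of $J^{r+s}\cU^r_m(M)$ lies in the locus where this algebraic system is solvable, and that locus is an open dense algebraic subbundle, which we call $\cH^{r,s}_m(M)$.

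The key steps, in order. \textbf{Step 1.} Write out~(\ref{eq:ML}) in jet coordinates: expanding $\overline{M}^{aB}_i\partial_B\bigl(\overline{\Lambda}^{iA}_b\partial_A g^b\bigr)=\delta^a_b\,g^b$ and collecting the coefficient of each derivative $\partial_C g^b$, $|C|\le r+s$, yields, for each $(a,b,C)$, one linear equation in the unknowns $\{\overline{M}^{aB}_i\}$ with coefficients that are polynomials (with integer coefficients) in the $(s+r)$-jet of $\overline{\Lambda}_r$; the leading ($|C|=r+s$) equations involve only the symbol data and the top coefficients of $\overline{\cM_s}$. \textbf{Step 2.} Count: the number of equations is $q'\cdot q'\cdot\#\{C:|C|\le r+s\}=m^2\binom{n+r+s}{n}$-ish, while the number of unknowns, thanks to the UTS reduction recorded just before the lemma (top-order terms drop from $m(m+1)\binom{n}{r}$ to $m\sum_{i=0}^m\binom{n-i}{r}$), grows like $(m+1)\cdot m\sum_{i}\binom{n-i}{s}$; the point is that with $q=m+1>m=q'$ the unknowns outnumber the equations once $s\ge s_{n,m}$ for a suitable threshold $s_{n,m}$ depending only on $n,m$ (and $r$), exactly as in Gromov's count, the extra column in the ``large'' matrix of Example~\ref{ex:large} being the model case $r=1$. \textbf{Step 3.} Solvability of a linear system $Ax=\delta$ is the non-vanishing of the appropriate maximal minors of the augmented matrix relative to $A$; since these minors are polynomials in the jet of $\Lambda_r$ that are not identically zero (exhibit one UTS PDO — e.g. a suitable upper-triangular modification of the Example~\ref{ex:large} operators, or the constant-coefficient model — for which the rank is maximal), the locus $\cH^{r,s}_m(M)\subset J^{r+s}\cU^r_m(M)$ where some such minor is nonzero is open, dense, and algebraic (it is the complement of a proper algebraic subset in each fiber). \textbf{Step 4.} On $\cH^{r,s}_m(M)$ choose the solution $\overline{M}^{aB}_i$ depending algebraically (e.g. rationally, via Cramer) on the jet of $\Lambda_r$ — so $\overline{\cM_s}$ is genuinely a PDO of order $s+r$ in $\Lambda_r$ and order $s$ in its argument; dualize to get $\cM_s=\overline{\,\overline{\cM_s}\,}$, which by $\overline{\overline{\cL_r}}=\cL_r$ and the multiplicativity of adjunction satisfies $\cL_r(\cM_s(\Lambda_r,g))=g$. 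This is the desired $\cH^{r,s}_m(M)$-universal right inverse.

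The main obstacle — and the only place where real work beyond bookkeeping is needed — is \textbf{Step 2} together with the non-degeneracy witness in \textbf{Step 3}: one must verify that the UTS symmetry constraint, which \emph{reduces} the number of free top-order coefficients of $\cL_r$, does not also force the algebraic system~(\ref{eq:ML}) to become overdetermined or degenerate. Concretely, the worry is that the symmetrization couples the $m+1$ ``columns'' in a way that kills the surplus of unknowns over equations that one has in the unconstrained case; I expect this to be handled exactly as Gromov handles it, by checking that after the UTS reduction the relevant count still leaves a positive surplus for $s$ past an explicit $s_{n,m}$, and by producing one explicit UTS operator (reducing to the constant-coefficient symbol computation, which decouples and is essentially the ``large'' condition of Example~\ref{ex:large} applied coordinatewise) at which the generic-rank minor is nonzero. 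Everything else — the passage between the two equivalent definitions of a PDO, the coordinate-independence of $\cH^{r,s}_m(M)$, and the fact that ``generic'' can be taken ``open dense algebraic'' — is routine and parallels the cited proof of Theorem~\ref{thm:linPDOs}.
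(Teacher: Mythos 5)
Your general strategy — pass to the adjoint via $\overline{\cL_r\cM_s}=\overline{\cM_s}\,\overline{\cL_r}$, reduce the construction of a right inverse to the purely algebraic linear system~(\ref{eq:ML}), and then define $\cH^{r,s}_m(M)$ as the locus where that system has an algebraic solution — is indeed the route the paper takes. However, there are two substantive errors that leave the proposal incomplete.

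First, the count in Step~2 is wrong. The unknowns of system~(\ref{eq:ML}) are the coefficients $\overline{M}^{aB}_\alpha$ of the would-be left inverse $\overline{\cM_s}$; they carry no UTS constraint whatsoever (the UTS symmetry is a condition on $\Lambda_r$, i.e.\ on the \emph{data} of the system, not on the unknowns), so applying the reduction $m(m+1)\binom{n}{r}\to m\sum_i\binom{n-i}{r}$ to them, as you do when you write the unknown count as ``$(m+1)\cdot m\sum_i\binom{n-i}{s}$'', is a category error. The correct bookkeeping starts by noting that~(\ref{eq:ML}) splits into $m$ independent subsystems, one for each value of $a$, each with $(m+1)\binom{n+s}{s}$ unknowns $\overline{M}^{aA}_\alpha$ and $m\binom{n+r+s}{r+s}$ equations; the threshold $s_{n,m}$ comes out of demanding $(m+1)\binom{n+s}{s}>m\binom{n+r+s}{r+s}$, which reduces to $\frac{m+1}{m}>\prod_{\ell=1}^r\bigl(1+\tfrac{n}{s+\ell}\bigr)$ and is achievable because the right-hand side $\to1$ as $s\to\infty$.

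Second, Step~3 as written is only a promissory note: you say ``exhibit one UTS PDO \dots for which the rank is maximal'' but you don't construct it, and it is precisely here that the paper takes a different and cleaner path. Rather than exhibiting an explicit operator, the paper observes that the system~(\ref{eq:hs2}) is \emph{triangular} in the sense that each unknown's column sees exactly one zero-order coefficient $\overline{\Lambda}^\alpha_{bA}$, and always in the unique non-homogeneous row. After eliminating those zero-order coefficients by recursive substitution, the hypothesis that the non-homogeneous row is a rational linear combination of the homogeneous ones would express all $(m+1)\binom{n+s}{s}$ zero-order coordinates as rational functions of the higher-order jet coordinates plus at most $m\binom{n+r+s}{r+s}$ auxiliary rational functions; this is impossible once the count above holds, because the zero-order coefficients are independent fiber coordinates of the jet bundle. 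This coordinate-independence argument both establishes that the defining polynomial $P$ of $\cH^{r,s}_m(M)=\{P\neq0\}$ is not identically zero \emph{and} delivers the explicit threshold in one stroke, sidestepping the explicit-example construction you were hoping to get away with. Until you either fix the count and actually produce the model UTS operator, or replace Step~3 by the triangularity argument, the proposal does not establish the lemma.
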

\begin{proof}
  As mentioned in the previous section, finding a right inverse for $\cL_r$ is equivalent to solving the algebraic system (\ref{eq:ML}).
  In every fiber of $J^{s}\cU^r_m(M)$, system (\ref{eq:ML}) writes as
%  In every fiber of $J^{s}(Hom(J^r(M,\R^m),J^0(M,\R^{m+1})))$, system (\ref{eq:ML}) writes as
%We consider the need to solve system
%We start by finding a {\em formal} universal right inverse $\cM_s$ for $\cL_r$, namely solving the following algebraic system on a fiber of $J^{s}(Hom(J^r(M,\R^m),J^0(M,\R^{m+1})))$:
  %
  \begin{equation} \label{eq:hs2}
    \begin{cases}
      \displaystyle\sum_{|A|\leq s}\overline{M}^{aA}_\alpha\overline{\Lambda}^\alpha_{bA}=\delta^a_{b}\cr
      \noalign{\medskip}
      \displaystyle\sum_{|A|\leq s}\overline{M}^{aA}_\alpha\overline{\Lambda}^{\alpha\beta_1}_{bA}
      +\displaystyle\sum_{|A|\leq s-1}\overline{M}^{a\beta_1A}_\alpha\overline{\Lambda}^\alpha_{bA}=0\cr
      \noalign{\medskip}
      \displaystyle\sum_{|A|\leq s}\overline{M}^{aA}_\alpha\overline{\Lambda}^{\alpha\beta_1\beta_2}_{bA}
      +\displaystyle\sum_{|A|\leq s-1}\overline{M}^{a\beta_1A}_\alpha\overline{\Lambda}^{\alpha\beta_2}_{bA}
      +\displaystyle\sum_{|A|\leq s-2}\overline{M}^{a\beta_1\beta_2A}_\alpha\overline{\Lambda}^\alpha_{bA}=0\cr
      \noalign{\medskip}
      \vdots\cr
      \displaystyle\sum_{|A|\leq1}\overline{M}^{a\beta_1\dots\beta_{s-1}A}_\alpha\overline{\Lambda}^{\alpha\beta_s\dots\beta_{s+r-1}}_{bA}
      +\overline{M}^{a\beta_1\dots\beta_{s}}_\alpha\overline{\Lambda}^{\alpha\beta_{s+1}\dots\beta_{s+r-1}}_b=0\cr
      \noalign{\medskip}
      \overline{M}^{a\beta_1\dots\beta_s}_\alpha\overline{\Lambda}^{\alpha\beta_{s+1}\dots\beta_{s+r}}_b=0\cr
    \end{cases}
  \end{equation}
  where $1\leq a,b\leq m$, $1\leq\alpha\leq m+1$ and $1\leq\beta_\ell\leq n$ for all $1\leq\ell\leq s+r$.

  Note that (\ref{eq:hs2}) naturally splits in $m$ independent systems, one for each value of $a$, of $(m+1){n+s\choose s}$ unknowns $\overline{M}^{aA}_\alpha$  
  in $m{n+r+s\choose r+s}$ equations. All rows but one in each of these independent systems are homogeneous and so each of them admits a {\em formal} 
  algebraic solution $\overline{M}^{aA}_\alpha(\overline{\Lambda}^{\alpha B}_{bA})$
  if and only if the non-homogeneous row is {\em not} a linear combination of the homogeneous ones with coefficient in the ring of rational functions in the fiber
  coordinates. Since the system is {\em triangular}, in the sense that in the column of each $\overline{M}^{aA}_\alpha$ only one coefficient $\overline{\Lambda}^{\alpha}_{bA}$ 
  of zero order appears (and it does it in the non-homogeneous row), we can recursively substitute the expressions of such coefficients in the other columns
  so that in the end no $\overline{\Lambda}^{\alpha}_{bA}$ appears anywhere except in the non-homogeneous row. When that row is a linear combination of the homogeneous
  ones, therefore, we can express the coefficients $\overline{\Lambda}^{\alpha}_{bA}$ as rational functions of all other coefficients $\overline{\Lambda}^{\alpha B}_{bA}$, $|B|\geq1$,
  and of a number of rational functions of all $\overline{\Lambda}^{\alpha B}_{bA}$ (the coefficient of the linear combination) equal to the number of homogeneous rows. 
  Since the $(m+1){n+s\choose s}$ coefficients are independent
  coordinates, such combination clearly cannot exist if we choose $s$ big enough that $(m+1){n+s\choose s}>m{n+r+s\choose r+s}$.
  Note that this is always possible since 
  $$
  (m+1){n+s\choose s}>m{n+r+s\choose r+s}\hbox{\ \ iff\ \ }\frac{m+1}{m}>\prod_{\ell=1}^r\left(1+\frac{n}{s+\ell}\right)
  $$
  and the r.h.s. term converges to 1 for $s\to\infty$. %E.g. the inequality holds for $s>mn$.

  The $\overline{M}^{aA}_\alpha$ solving (\ref{eq:hs2}) are algebraic functions of the coordinates $\overline{\Lambda}^{\alpha B}_{bA}$. Let $P$ be the polynomial
  obtained as the product of all denominators of such functions and define $\cH=\{P\neq0\}$. Then $\cH^{r,s}_m(M)$ is an algebraic subbundle of
  $J^{r+s}\cU^r_m(M)$ satisfying the claim of the theorem.
%  $J^{r+s}(Hom(J^r(M,\R^{m+1}),J^0(M,\R^m)))$ satisfying the claim of the theorem.
\end{proof}
Of course such inverse remains only formal until we are able to prove the existence of global sections $\Lambda_r$ whose $(r+s)$-jets images are contained
in $\cH^{r,s}_m(M)$. Following Gromov, we prove this by showing that the codimension of $\cH^{r,s}_m(M)$ grows with $s$ large enough that the image 
of the $(r+s)$-jets of {\em generic} sections must be entirely contained into it.
\begin{definition}
  \label{def:transversality}
  A linear PDO $\cL_r:\Gamma^rF\to \Gamma^0G$, $\cL_r(f)=\left(\sum_{|A|\leq r}\Lambda^{a A}_i\partial_A f^i\right)$, is {\em transversal} to a 
  codimension-$k$ submanifold $E_0\subset E$ at $x_0\in E_0$, locally defined by $H(x)=0$ %, $\ell=1,\dots,k$,
%  codimension-$k$ submanifold $E_0\subset E$ at $x_0\in E_0$ locally defined by $x^\ell=\nu^\ell(x^{k+1},\dots,x^n)$, $\ell=1,\dots,k$,
  for some smooth map $H:E\to\R^k$, if the $k$ matrices $q\times q'$ 
  $$
  T^\ell(x_0)=\left(\Lambda^{a\alpha_1\dots\alpha_r}_i(x_0)\partial_{\alpha_1}H^{\ell}(x_0)\dots\partial_{\alpha_r}H^{\ell}(x_0)\right)
  $$
  have all maximal rank, i.e. $\rk\Lambda^\ell(x_0)=q'$ for all $\ell=1,\dots,k$. If $\cL_r$ is not transversal to $E_0$, we say that it is 
  {\em tangent} to it. We say that $E_0$ is a {\em characteristic submanifold} for $\cL_r$ if $\cL_r$ is tangent to $E_0$ at every point.
\end{definition}  
\begin{example}
  The linear 1-st order PDO $\Xi$ in Example~\ref{ex:large} is transversal to the hypersurface $E_0=\{H(x)=0\}$ at $x=x_0$ if $L_{\xi_i}H(x_0)\neq0$
  for at least one index $1\leq i\leq q$, i.e. if at least one vector field $\xi_i$ is transversal to $E_0$ at $x_0$ in the usual sense (namely
  that $\xi_i(x_0)$ is not contained in $T_{x_0}E_0$). On the contrary, $E_0$ is characteristic for $\Xi$ if the characteristics (i.e. the integral
  trajectories) of all vector fields $\xi_i$ are entirely contained in $E_0$.
\end{example}
\begin{example}
  Consider a (pseudo-)Riemannian constant metric $g=g_{\alpha\beta}dx^\alpha dx^\beta$ on $\R^n$, so that the Laplacian PDO is given
  by $\Delta_g(f)=g^{\alpha\beta}\partial^2_{\alpha\beta}f$, where $(g^{\alpha\beta})$ is the inverse matrix of $(g_{\alpha\beta})$. Then
  $\Delta_g$ is transversal to the hypersurface $E_0=\{H(x)=0\}$ at $x=x_0$ iff $g(dH,dH)=0$ is zero at $x=x_0$. For example, if
  $g$ is Riemannian then it has no characteristic submanifold of codimension 1 while, if its signature is $(1,n-1)$, its only characteristic
  codimension-1 submanifolds are its light-cones.
\end{example}
%  If $r=k=q'=1$, $\cL_1(f)=\lambda_i f^i+$ this condition amounts to 
%\end{example}
%
%Although this kind of operator is far from being generic, it is ``sufficiently 
%We recall that $\cL_r$ is {\em transversal} to 
%
\begin{lemma}
  \label{lem:UTS}
  Generic UTS PDOs have no characteristic submanifold of positive codimension.
\end{lemma}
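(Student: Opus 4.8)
The plan is to show that, for a generic UTS PDO $\cL_r$, at every point $x_0\in M$ and for every codimension-$k$ submanifold $E_0\ni x_0$ ($k\geq 1$), the transversality condition of Definition~\ref{def:transversality} holds; by Definition~\ref{def:transversality} this means there is no characteristic submanifold of positive codimension. Since transversality to $E_0$ at $x_0$ depends only on the tangent space $T_{x_0}E_0$ (equivalently, on the hyperplane directions $dH^1(x_0),\dots,dH^k(x_0)$), and since a codimension-$k$ submanifold is contained in a codimension-$1$ one, it suffices to treat the case $k=1$: show that for generic $\cL_r$ and every $x_0$ and every nonzero covector $\omega=dH(x_0)\in T^*_{x_0}M$, the $q'\times q$ matrix $T(x_0)=\bigl(\Lambda^{a\alpha_1\dots\alpha_r}_i(x_0)\,\omega_{\alpha_1}\cdots\omega_{\alpha_r}\bigr)$ has rank $q'=m$. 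Here $q=m+1$, so this is the condition that an $m\times(m+1)$ matrix depending on the top-order symbol has full rank.

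The key step is a dimension count in jet space, exactly in the spirit of Gromov's argument and of the previous Lemma. Fix $x_0$; the relevant data are the top-order coefficients $\Lambda^{\alpha\beta_1\dots\beta_r}_a$ of a UTS PDO, which by Definition~\ref{def:UTS} live in a space of dimension $m\sum_{i=0}^{m}\binom{n-i}{r}$ — crucially at least as large as $m(m+1)\binom{n-1}{r-1}/\dots$; what matters is that it is a space in which the symbol map $\omega\mapsto T(x_0)[\omega]$ is, for generic coefficients, as non-degenerate as the symmetry constraints allow. First I would describe the "bad set" $\Sigma\subset \cU^r_m(M)_{x_0}\times (\mathbb{RP}^{n-1})$ of pairs (top-symbol, covector direction) for which $\mathrm{rk}\,T(x_0)<m$, i.e. for which $T$ drops rank. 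For a fixed $\omega\neq 0$, the map sending the top coefficients to $T(x_0)[\omega]\in \mathrm{Hom}(\R^{m+1},\R^m)$ is linear and surjective: one checks that, even under the UTS symmetrization, one can independently prescribe the $m(m+1)$ entries of $T[\omega]$ by choosing coefficients supported on multi-indices $(\alpha,\beta_1,\dots,\beta_r)$ in which $(\beta_1,\dots,\beta_r)$ are the indices "dual" to $\omega$ (e.g. after a linear change of coordinates making $\omega=dx^1$, the coefficient $\Lambda^{\alpha 1\dots 1}_a$ contributes exactly the $(a,\alpha)$ entry, and these multi-indices respect the UTS constraint since the first index $\alpha\le m+1$ can repeat freely). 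Hence the locus of top-symbols with $\mathrm{rk}\,T[\omega]<m$ is a proper algebraic subset of positive codimension — at least codimension $2$, since a generic $m\times(m+1)$ matrix has full rank and the maximal-minor variety has codimension $2$ in the space of all $m\times(m+1)$ matrices. Letting $\omega$ vary over the $(n-1)$-dimensional $\mathbb{RP}^{n-1}$, the total bad set $\Sigma$ has codimension at least $2-(n-1)$... which is not enough by itself; so the real content is that one must iterate in $s$ as in the previous Lemma — pass to $(r+s)$-jets and use that the codimension of the analogous bad set grows linearly in $s$ while the base dimension $n$ stays fixed.

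Concretely, I would mimic the proof strategy of the preceding section: replace $\cU^r_m(M)$ by the jet bundle $J^{r+s}\cU^r_m(M)$ (or, more simply, prolong by differentiating the condition along $E_0$), observe that requiring $T[\omega]$ to drop rank along a whole submanifold $E_0$ through $x_0$ imposes the rank-drop not just at $x_0$ but on an $(r+s)$-jet's worth of derived conditions, whose combined codimension exceeds $\dim M = n$ once $s$ is large; then Thom transversality (the parametric/multijet version) gives that a generic section $\Lambda_r$ avoids the bad set entirely, i.e. is transversal to every positive-codimension submanifold at every point. The main obstacle I anticipate is precisely the bookkeeping that the UTS symmetry constraint does not secretly force a rank drop: one must verify that, after symmetrizing the top coefficients over permutations of $(\alpha,\beta_1,\dots,\beta_r)$ fixing the requirement that the leading index stay $\leq m+1$, the induced map to the space of $m\times(m+1)$ symbol matrices is still surjective for each fixed $\omega$ — equivalently that the "missing" symmetrizations (those that would move a $\beta$ into the first slot when $\beta>m+1$) do not obstruct independently prescribing the $m(m+1)$ entries. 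Granting that (which the count $m\sum_{i=0}^m\binom{n-i}{r}\geq m(m+1)$ for $r\le n-m$, handled as in the remark after Definition~\ref{def:UTS}, makes plausible), the transversality/codimension argument goes through verbatim as in Gromov's proof of Theorem~\ref{thm:linPDOs}, and the conclusion follows.
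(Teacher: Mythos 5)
The paper's proof is a short, direct codimension count inside the base manifold $M$: assuming $E_0=\{H=0\}$ is a characteristic submanifold of codimension $k$, it must lie in the set $C_k=\{\rk T^\ell(x)<m,\ \ell=1,\dots,k\}$; after reducing $H^\ell$ to coordinate functions $x^\ell$, one sees that when $r>1$ the matrices $T^\ell(x)=(\Lambda^{\alpha\ell\dots\ell}_a(x))$ share no entries, so each rank-drop condition is an independent codimension-$2$ condition on $M$ and $\codim C_k=2k$. Since $\codim E_0=k<2k$, $E_0$ cannot be contained in $C_k$ — contradiction. That is the whole argument; no passage to higher jets, no Thom transversality, no iteration in $s$ is needed.

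Your proposal diverges from this at the very first step, and the divergence introduces a gap. The reduction to $k=1$ is not justified: transversality to a codimension-$k$ submanifold requires \emph{all} $k$ matrices $T^\ell$ to have full rank, so it is a \emph{stronger} condition than transversality to any single hypersurface $\{H^1=0\}$ containing $E_0$; ruling out codimension-$1$ characteristic submanifolds therefore does not rule out those of higher codimension. Moreover, the paper's contradiction actually gets \emph{stronger} as $k$ grows (codimension $2k$ versus $k$), so the general-$k$ argument is the natural one to keep. The second problem is that you count codimensions in the wrong space. You estimate the bad locus inside the fibre of top-order symbols and then let the covector direction $\omega$ range over $\mathbb{RP}^{n-1}$, arriving at the (useless) bound $2-(n-1)$; but the relevant object is the rank-drop locus $C_k\subset M$ — the pullback of the bad symbol stratum under the generic coefficient map $x\mapsto\Lambda(x)$ — whose codimension in $M$ is $2k$. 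That pullback is exactly what makes the $2k>k$ contradiction land, and it is the observation your proposal misses. Finally, because you have not noticed this, you are forced to appeal to jet prolongation and a growing-codimension argument "as in the previous Lemma," which belongs to the proof of Theorem~\ref{thm:UTSrel}, not to this Lemma, and you correctly note you do not carry it through. In short: the paper's proof is elementary and pointwise, your route replaces it with an incomplete jet-transversality argument, and the step from your (correct) observation that the rank-drop locus of $m\times(m+1)$ matrices has codimension $2$ to the actual conclusion is missing.
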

\begin{proof}
  Assume that $E_0=\{H(x)=0\}$, $H:E\to\R^k$, is a codimension-$k$ characteristic submanifold for $\cL_r:C^r(M,\R^{m+1})\to C^0(M,\R^m)$. 
  Then $E_0$ is contained in the set $C_k=\{\rk T^\ell(x)<m\,,\ell=1,\dots,k\}$. The number of such functionally independent relations can
  be evaluated by considering the case $H^\ell(x^1,\dots,x^n)=x^\ell$, $1\leq\ell\leq k$. When $r>1$, the $m\times(m+1)$ matrices 
  $$
  T^\ell(x)=(\Lambda^{\alpha\overbrace{\ell\dots\ell}^r}_a(x))
  $$
  have no element in common for all $\ell$ and so $\codim C_k=2k$, which is incompatible with the facts that $\codim E_0=k$ and $E_0\subset C_k$.
  When $r=1$ and $k>m+1\geq2$ then the number of independent relations is $2k-1$, so we get again an absurd, while for $k\leq m+1$ we have
  again $\codim C_k=2k$.
%  The proof is identical to the generic case for $r>1$, since the matrices $\Lambda^{\alpha\ell\dots\ell}_a$, $1\leq\ell\leq n$,
%  have no elements in common. When $r=1$ instead the matrix with $\alpha=m+1$ has $m$ columns in common with the matrices
%  with $\alpha\leq m$ and so we lose some codimension. Nevertheless $c(s,k)\geq 2\max\{k-1,1\}$ and for large $s$ we get
%  that $\codim\Sigma^s_k\geq 2\max\{k-1,1\}\frac{(n-k+s)!}{(n-k)!s!}-k\left(\frac{(n-k+s+1)!}{(n-k)!(s+1)!}-1\right)$,
%  that is larger than $n$ for $k=n$ and can be made larger than $n$ for $s$ big enough when $k<n$.
\end{proof}
%
%Finally, we recall a technical Lemma extracted by Gromov (see Lemma~2 in Section~2.3.8 (E) in~\cite{Gro86}) from the Cauchy-Kovaleskaya theorem:
%%
%\begin{lemma}
%  \label{lem:CK}
%  Let the PDO $\cL_r:\Gamma^r F\to \Gamma^0 G$ be transversal to the hypersurface $E_0=\{P(x)=0\}$, $P:E\to\R$, at $x_0$. Then, for every $\ell=0,1,\dots$, there exist $s=s(r,\ell)$
%  and two PDOs $\cM_s:\Gamma^s G\to\Gamma^0 F$ and $\cN_s:\Gamma^s G\to\Gamma^0 G$ whose coefficient are rational functions of $j^s\Lambda_r$ and
%  $j^sP$ regular at $x_0$ and such that, in some neighborhood of $x_0$, 
%  $$
%  \cL_r(\cM_s(g))+P^\ell\cdot\cN_s(g)=g\hbox{\ \ for all\ \ }g\in\Gamma^{r+s}G
%  $$
%\end{lemma}
%\begin{lemma}
%  \label{thm:WGUTS}
%  The formal adjoint $\overline{\cL_r}(g^1,\dots,g^m)=\left(\sum_{|B|\leq r}(-1)^{|B|}\partial_B\left(\Lambda^{\alpha B}_a g^a\right)\right)$ of
%  a WGUTS $C^r(M,\R^n)\to C^0(M,\R^m)$ is a WGUTS $C^r(M,\R^m)\to C^0(M,\R^n)$.
%%:C^r(M^m)\to C^0(\R^n)$ of a WGUTS PDO $\cL_r:C^r(M^n)\to C^0(\R^m)$ is a WGUTS.
%\end{lemma}
%
%\begin{proof}
%  
%\end{proof}
The proof of the following proposition is the same as the one of Lemma 4 in Section~2.3.8 (E) in~\cite{Gro86}.
\begin{theorem}
  \label{thm:UTSrel}
%  There exists a $s\geq n/((1+1/m)^{1/r}-1)$ and an algebraic subbundle $\cH^{r,s}_m(M)\subset J^{r+s}\cU^r_m(M)$ such that $\codim\cH^{r,s}_m(M)> n$ and every UTS PDO
  There exists a $\bar s=s(n,m,r)\geq\frac{n}{(1+1/m)^{1/r}-1}$
%, with 
%  There exists a $\bar s=s(n,m,r)$, with 
%  $\lim_{n\to\infty}s(n,m,r)=\infty$ and 
%  $s(n,m,r)\geq\frac{n}{(1+1/m)^{1/r}-1}$, 
  such that, for every $s\geq \bar s$, there is a maximal algebraic subbundle $\cH^{r,s}_m(M)\subset J^{r+s}\cU^r_m(M)$ whose complement has codimension larger than $n$ 
  and with the property that every UTS PDO $\Lambda_r:M\to Hom(J^r(M,\R^{m+1}),J^0(M,\R^m))$ satisfying $j^{r+s}\Lambda_r(M)\subset\cH^{r,s}_m(M)$ admits a 
  $\cH^{r,s}_m(M)$-universal right inverse.
%  %
%  \begin{enumerate}
%    \item $\lim_{n\to\infty}s(n,m,r)=\infty$, $s(n,m,r)\leq\frac{n}{(1+1/m)^{1/r}-1}$;
%    \item $\codim\cH^{r,s}_m(M)> n$ for all ;
%    \item every UTS PDO $\Lambda_r:M\to Hom(J^r(M,\R^{m+1}),J^0(M,\R^m))$ satisfying $j^{r+s}\Lambda_r(M)\cap\cH^{r,s}_m(M)=\emptyset$ admits a $\cH^{r,s}_m(M)$-universal right inverse.
%  \end{enumerate}
  %
\end{theorem}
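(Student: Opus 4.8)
The plan is to follow Gromov's Lemma~4 of Section~2.3.8~(E) in \cite{Gro86}, splitting the statement into its two parts: the construction of a $\cH^{r,s}_m(M)$-universal right inverse over a suitable subbundle, and the codimension bound on the complement of that subbundle. The first part is essentially a restatement of the previous lemma; the second is where the real work lies.

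First I would fix the subbundle. Fibrewise, equation (\ref{eq:ML}) is the triangular algebraic system (\ref{eq:hs2}), which for each fixed $a$ has $C=(m+1)\binom{n+s}{s}$ unknowns $\overline M^{aA}_\alpha$ and $R=m\binom{n+r+s}{r+s}$ equations; as computed in the previous lemma, $C>R$ as soon as $1+1/m>\prod_{\ell=1}^{r}(1+n/(s+\ell))$, which holds in particular whenever $s\geq n/((1+1/m)^{1/r}-1)$. Write $N=N(j)$ for the coefficient matrix of (\ref{eq:hs2}), a linear map $\R^{C}\to\R^{R}$ depending polynomially on the $(r+s)$-jet $j$ of $\Lambda_r$, and let $\cH^{r,s}_m(M)\subset J^{r+s}\cU^r_m(M)$ be the open algebraic subbundle $\{\det(NN^{\mathsf T})\neq0\}$, i.e. the locus where $N$ has maximal rank $R$. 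Over $\cH^{r,s}_m(M)$ the map $N$ is surjective, so the minimal-norm solution $\overline M=N^{\mathsf T}(NN^{\mathsf T})^{-1}\delta$ of (\ref{eq:hs2}) (here $\delta$ denotes its Kronecker right-hand side) is a well-defined rational, hence $C^{\infty}$, function of $j$ and solves (\ref{eq:hs2}) identically; the associated PDO $\cM_s$ is therefore a $\cH^{r,s}_m(M)$-universal right inverse of order $r+s$ in the first argument and linear of order $s$ in the second. (Equivalently one may patch, by a partition of unity on $\cH^{r,s}_m(M)$, the Cramer-type inverses furnished by the previous lemma; this is legitimate precisely because (\ref{eq:hs2}) is affine-linear in $\overline M$, so convex combinations of its solutions are again solutions.) This settles the ``universal right inverse'' clause for every $s\geq n/((1+1/m)^{1/r}-1)$ and identifies $\cH^{r,s}_m(M)$ as the maximal open algebraic subbundle over which the construction applies.

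The heart of the matter is the codimension estimate for the complement $B=\{\rk N<R\}$. In the ambient space of all $R\times C$ matrices this determinantal locus has codimension exactly $C-R+1$; and since $C/R\to(m+1)/m>1$ while $R\to\infty$, the excess $C-R$ -- which grows like $\binom{n+s}{s}\sim s^{n}/n!$ -- tends to infinity, so $C-R+1>n$ once $s$ is large. What must be checked is that this codimension persists inside the UTS stratum, i.e. for the image of $j\mapsto N(j)$, $j\in J^{r+s}\cU^r_m(M)$, whose entries are constrained, being the $s$-jet of the adjoint of an operator with totally symmetric top-order part. Here I would argue as in \cite{Gro86}: over a fixed point $x_0$ every admissible jet is realized by a polynomial section, so all lower-order coefficients of $\Lambda_r$ and all partials of its top-order coefficient are free parameters; using the triangularity of (\ref{eq:hs2}) -- in the column of each $\overline M^{aA}_\alpha$ the only zero-order coefficient occurring is $\overline\Lambda^{\alpha}_{bA}$, and it occurs only in the inhomogeneous row -- one eliminates the zero-order coefficients and finds $B$ cut out by $\gtrsim\binom{n+s}{s}$ functionally independent polynomial relations among the remaining jet coordinates, whence $\codim B\gtrsim s^{n}/n!$. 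Lemma~\ref{lem:UTS} is then invoked to discard the lower-dimensional loci on which this elimination could degenerate: such a locus would be a positive-codimension characteristic submanifold of a generic UTS PDO, and there are none. Taking $\bar s=s(n,m,r)$ to be the least $s\geq n/((1+1/m)^{1/r}-1)$ for which the resulting bound exceeds $n$ yields the theorem.

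The hard part is thus neither the combinatorics of (\ref{eq:hs2}) (already done in the previous lemma) nor the genericity/transversality step (routine once the codimension is known), but the bookkeeping showing that the degeneracy locus of (\ref{eq:hs2}) meets the UTS jet-stratum in the expected large codimension -- that is, that the symmetry relations defining $\cU^r_m(M)$ do not conspire to make $N$ drop rank on a set of codimension $\leq n$. That is exactly the point at which Lemma~\ref{lem:UTS} enters, and the only point at which the argument departs from the literal statement of Gromov's Lemma~4 in \cite{Gro86}.
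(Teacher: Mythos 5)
The paper itself does not prove Theorem~\ref{thm:UTSrel}: it states that the proof is ``the same as the one of Lemma~4 in Section~2.3.8~(E) in~\cite{Gro86}'', with the preceding (unnumbered) lemma and Lemma~\ref{lem:UTS} supplied as the two UTS-specific ingredients. Your proposal takes exactly that route -- construct the open locus where~(\ref{eq:hs2}) is solvable, show the right inverse exists there, and then estimate the codimension of the complement -- so the overall approach is the same as the paper's. The first half of your argument (the $\{\det(NN^{\mathsf T})\neq0\}$ subbundle and the minimal-norm formula for $\overline M$) is a clean repackaging of the preceding lemma, and the threshold $s\geq n/((1+1/m)^{1/r}-1)$ comes out of the same count $(m+1)\binom{n+s}{s}>m\binom{n+r+s}{r+s}$.

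The gap is in the codimension estimate, and it is more serious than your closing paragraph suggests. The figure $C-R+1$ is the codimension of the rank-deficient locus in the space of \emph{all} $R\times C$ matrices; it does not automatically bound $\codim\{j:\rk N(j)<R\}$ inside $J^{r+s}\cU^r_m(M)$, because the map $j\mapsto N(j)$ is far from a submersion -- the Leibniz prolongation forces the same jet coordinate $\overline\Lambda^{\alpha B}_{bA}$ to appear in many entries of $N$, and your ``$\gtrsim\binom{n+s}{s}$ functionally independent relations after eliminating the zero-order coefficients'' is asserted, not derived from that structure. Moreover, the remedy you propose does not fit: Lemma~\ref{lem:UTS} is a statement about characteristic \emph{submanifolds of $M$} for a fixed PDO $\cL_r$, whereas the degeneracy you need to exclude lives in the jet bundle $J^{r+s}\cU^r_m(M)$; saying ``such a locus would be a positive-codimension characteristic submanifold'' conflates the two. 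In Gromov's argument the no-characteristic-submanifold lemma is what guarantees that, once one pulls the complement of $\cH^{r,s}_m$ back through the $(r+s)$-jet of a section, transversality holds -- a different (and prior) use than discarding strata of the jet space. So the proposal reaches the correct threshold and identifies the right components, but the central step -- that the symmetry constraints of $\cU^r_m$ do not collapse the codimension below $n$ -- remains a sketch with a misapplied lemma rather than a proof.
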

As an immediate corollary we get the surjectivity of generic UTS PDOs:
\begin{theorem}
  \label{thm:UTS}
%  A weakly generic upper totally symmetric PDO $\cL_r:C^r(M^n)\to C^0(\R^m)$ is surjective.
  A generic upper totally symmetric PDO $\cL_r:C^r(M,\R^{m+1})\to C^0(M,\R^m)$ is surjective.
\end{theorem}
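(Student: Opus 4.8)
The plan is to deduce Theorem~\ref{thm:UTS} from Theorem~\ref{thm:UTSrel} by a standard Thom-transversality argument, exactly paralleling Gromov's deduction of Theorem~\ref{thm:linPDOs} from the corresponding codimension estimate. Fix the integer $s\geq\bar s=s(n,m,r)$ provided by Theorem~\ref{thm:UTSrel}, together with the maximal algebraic subbundle $\cH^{r,s}_m(M)\subset J^{r+s}\cU^r_m(M)$ whose complement $\Sigma\bydef J^{r+s}\cU^r_m(M)\setminus\cH^{r,s}_m(M)$ has fiberwise codimension $>n=\dim M$. The key point is that $\Sigma$, being the zero set of the polynomial $P$ from the previous lemma, is a (real algebraic, hence stratified) subset of $J^{r+s}\cU^r_m(M)$ each of whose strata has codimension $>n$.

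First I would recall the genericity statement in the Thom jet-transversality form: for a generic (residual, in the strong Whitney topology) section $\Lambda_r$ of $\cU^r_m(M)\to M$, the prolongation $j^{r+s}\Lambda_r:M\to J^{r+s}\cU^r_m(M)$ is transversal to every stratum of $\Sigma$. Since each stratum has codimension $>n=\dim M$, transversality forces $j^{r+s}\Lambda_r(M)$ to miss $\Sigma$ entirely, i.e. $j^{r+s}\Lambda_r(M)\subset\cH^{r,s}_m(M)$. By Theorem~\ref{thm:UTSrel}, such a $\Lambda_r$ then admits a $\cH^{r,s}_m(M)$-universal right inverse $\cM_s$, meaning $\cL_r(\cM_s(\cL_r,g))=g$ for all $g\in\Gamma^\infty G$; in particular $\cL_r$ is surjective onto $\Gamma^\infty G$, and by the usual approximation one gets surjectivity on the relevant $C^k$ scales as well. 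Thus the generic UTS PDO is surjective.

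The two points that need a little care, and which I would spell out, are the following. First, the jet-transversality theorem must be applied to the space of \emph{UTS} sections, i.e. sections of the subbundle $\cU^r_m(M)$ rather than of all of $Hom(J^r(M,\R^{m+1}),J^0(M,\R^m))$; since $\cU^r_m(M)$ is itself a smooth subbundle, Thom transversality applies verbatim within it, and the constraint imposed by the UTS symmetry is precisely what was built into the codimension count in Theorem~\ref{thm:UTSrel}. Second, one must know that $\cH^{r,s}_m(M)$ is a genuinely \emph{open} condition on the $(r+s)$-jet so that ``$j^{r+s}\Lambda_r(M)\subset\cH^{r,s}_m(M)$'' is an open condition on $\Lambda_r$ and the set of such $\Lambda_r$ is open as well as dense; this is immediate from $\cH^{r,s}_m(M)=\{P\neq0\}$. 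The density then follows from the transversality argument above together with Baire's theorem.

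I expect the main obstacle to be purely bookkeeping rather than conceptual: namely, making sure that the non-compactness of $M$ is handled correctly, so that the transversal sections form a \emph{residual} (and hence dense) set in the strong Whitney topology, and that ``generic'' in the statement is understood in this sense. This is exactly the point where Gromov's proof of Lemma~4 in Section~2.3.8~(E) of~\cite{Gro86} is invoked, and since the codimension estimate in Theorem~\ref{thm:UTSrel} is strictly greater than $n$ (with room to spare, as the bound $\bar s\geq n/((1+1/m)^{1/r}-1)$ was chosen precisely to guarantee this), the transversality argument closes with no loss. Hence the corollary is indeed immediate, as asserted.
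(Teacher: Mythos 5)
Your proposal is essentially the proof the paper intends: Theorem~\ref{thm:UTS} is stated as an immediate corollary of Theorem~\ref{thm:UTSrel}, and the Thom jet-transversality argument you spell out --- that a generic section of $\cU^r_m(M)$ has its $(r+s)$-jet transversal to the (stratified, algebraic) complement of $\cH^{r,s}_m(M)$, which by the codimension-$>n$ bound forces the jet image to avoid it entirely --- is precisely the standard deduction following Gromov's Section~2.3.8(E) of~\cite{Gro86}. One small imprecision worth noting: the complement $\Sigma$ of the \emph{maximal} subbundle $\cH^{r,s}_m(M)$ from Theorem~\ref{thm:UTSrel} is not the zero set of the single polynomial $P$ from the preceding Lemma (that set would generically have codimension one); the maximal $\cH^{r,s}_m(M)$ is obtained by taking the union over all choices of invertible square subsystem, so $\Sigma$ is the intersection of the corresponding $\{P=0\}$ sets --- but since all your transversality step actually uses is that $\Sigma$ is algebraic of fiberwise codimension $>n$, which is exactly what Theorem~\ref{thm:UTSrel} guarantees, your argument goes through unchanged.
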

\begin{remark}
  \label{rm:sub}
  Note that in no part of the proofs of the Theorems and Lemmas of this section that lead to Theorem~\ref{thm:UTS} were ever 
  explicitly used the components of the terms of intermediate degree $0<r'<r$. Hence all properties proved in this section hold,
  mutatis mutaindis, for PDOs that are sections of any subbundle of $\cU^r_m(M)$ defined by any number of closed relations
  among such components (the same argument holds, indeed, also for Gromov's Theorem~\ref{thm:linPDOs}).
\end{remark}
%
%\begin{proof}
%  Bla bla bla...
%  In order to justify the existence of sections $(\overline{\Lambda}^{\alpha B}_{bA}(x))$ that are not singular over the formal solutions, we observe
%  that this immediately descend form Gromov's proof of Lemma 4 at p. ???.
%\end{proof}
%
\section{Surjectivity of the compatibility operators}% $\cL_f$}%Formal Solvability}
Now we go back, in a slightly more general setting, to the compatibility operators we mentioned in Section 1 and provide condition for their surjectivity.
\begin{definition}
  \label{def:rank}
  We call $r$-rank of a map $f:M\to\R^q$ at $x\in M$ the rank of the matrix $D^r_xf$ of the partial derivatives of $f$ at $x$ up to order $r$. A map is $r$-free 
  if $\rk D^r_xf=s_{n,r}\bydef{n+r\choose r}-1$ for every $x\in M$, namely if the vectors of all its partial derivatives up to order $r$ are linearly independent
  at every point. Finally, we say that a $r$-free map $f:M\to\R^q$ has {\em full $r+1$-rank} when $s_{n,r}<q\leq s_{n,r+1}$ and $\rk D^{r+1}_xf=q$ for all $x\in M$,
  namely when, at every $x$, its $s_{n,r}$ vectors of its partial derivatives up to order $r$ are mutually linearly independent and exactly $q-s_{n,r}$ of 
  the $s_{n,r+1}-s_{n,r}$ vectors of its derivatives of order $r+1$ are linearly independent from them.
%previous 
%span at every point a vector space of dimension $q-s_{n,r}$ transversal to .
%has full $r$-rank
%the rank of  is equal to its maximum possible, namely ${n+k\choose k}-1$.
\end{definition}  
%\
Note that clearly $r$-free maps are also $r'$-free for all $r'\leq r$ and can only
%maps of full $r$-rank can 
arise when $q\geq s_{n,r}$. 
\begin{example}
%  A map with full 1-rank at every point is an immersion, a map with full 2-rank at every point is a free map.
%  A map 1-free at every point is an immersion, a map 2-free at every point is a free map.
  A 1-free map is an immersion, a 2-free map is a standard free map. A full 2-rank map $f:M\to\R^q$, $n<q\leq n+s_n$ is an immersion
  such that the $n+s_n$ vectors $\{f_\alpha(x),f_{\alpha\beta}(x)\}$ span at every point the whole $\R^q$. A full $(r+1)$-rank map with
  $q=s_{n,r+1}$ is a $(r+1)$-free map.
\end{example}
\begin{remark}
  Except for the case $r=1$, this definition is not invariant with respect to the whole group $\Diff(\R^m)$ but only with respect to its subgroup of {\em affine} 
  transformation. Geometrically, this corresponds to the fact that the only tensor bundle among the $J^{r}(M,\R^m)\to J^{r-1}(M,\R^m)$ is the one with $r=1$, while
  all others are affine. These definitions, though, can be made invariant if we replace $\R^m$ with a Riemannian manifold and partial derivatives with the corresponding
  covariant derivatives.
\end{remark}

We are interested in the following natural map of full $(r+1)$-rank maps into PDOs of order $r$. Given a full $(r+1)$-rank map $f:M\to\R^q$, between
the $s_{n,r+1}$ vectors of its partial derivatives up to order $r+1$ hold at every point $m=s_{n,r+1}-q$ linear relations 
$$
\sum_{|A|\leq r+1}\lambda_a^A(f)\,\partial_A f^i=0,\;a=1,\dots,m
$$
where the $m\cdot s_{n,r+1}$ coefficients $\lambda_a^A(f)\in C^0(M)$, 
defined modulo a factor $\pm e^p$, $p\in C^0(M)$, are not all zero at the same time at any point.
%that at least one of 
%them is not zero at every $x\in M$.
% and they can be chosen 
%as polynomials in the components of $j^{r+1}f$. 
%
\begin{definition}
  \label{def:Lf}
  We call {\em compatibility operator} associated to the full $(r+1)$-rank map $f:M\to\R^q$ the PDO of order $r$ 
  $\cL_f:C^r(M,\R^n)\to C^0(M,\R^m)$ defined as
  $$
  \cL_f(h) = \sum_{|A|\leq r}\lambda_a^{\alpha A}(f)\,\partial_A h_\alpha\,.
  $$
\end{definition}
The elementary linear algebra lemma below shows that the $\lambda_a^A(f)$ can all be chosen as homogeneous fully antisymmetric polynomials of degree $q$.
\begin{definition}
  \label{def:v}
%  Given a set of $q$ vectors $\{v_1,\dots,v_q\}\subset\R^{q-m}$ , we denote by $V_{\ell}$, $\ell=q-m+1,\dots,q$, the $(q-m+1)\times(q-m)$ matrix 
  Given a set of $N$ vectors $v=\{v_1,\dots,v_N\}\subset\R^{N-m}$ , we denote by $V_{\ell}$, $\ell=1,\dots,m$, the $(N-m+1)\times(N-m)$ matrix 
  whose $N-m+1$ columns are the components of the vectors $\{v_1,\dots,v_{N-m},v_{N-m+\ell}\}$, by $V_{\ell,i}$, $i=1,\dots,N-m+1$ the $(N-m)\times(N-m)$ 
  matrix obtained from $V_\ell$ by removing the $i$-th column and by $V_0$ the $(N-m)\times(N-m)$ matrix whose columns are the components of the first $N-m$ vectors. 
\end{definition}
\begin{lemma}
  \label{lem:v}
%  Let $\{v_1,\dots,v_q\}$ be a set of $q$ vectors in $\R^{q-m}$ spanning the whole vector space and ordered so that the first $q-m$ are linearly
%  independent. Then, when $m=1$, the coefficients $\lambda^i$ expressing the linear dependency condition can be taken as 
%  $\lambda^i=(-1)^i\det\left(\hat v_i\right)$, where $\left(\hat v_{i_0}\right)$ is the matrix whose columns are the components of the 
%  vectors $v_1,\dots,v_{q}$, in this order, except for $v_{i_0}$. When $m>1$, we 
  Let $\{v_1,\dots,v_N\}$ be a set of $N$ vectors in $\R^{N-m}$ spanning the whole vector space and ordered so that the first $N-m$ are linearly
  independent. Let $\lambda^i_a$ be non-trivial coefficients expressing the linear dependency conditions between the $v_i$, namely 
  $\lambda^i_a v_i=0$ for $a=1,\dots,m$. Then, for every $a$, these coefficients can be chosen as: 
  $\lambda^i_a=(-1)^i\det V_{a,i}$ for $i=1,\dots,N-m$; $\lambda^{N-m+a}_a=(-1)^{N-m+1}\det V_0$; $\lambda^i_a=0$ otherwise.
%
%, when $m=1$, the coefficients $\lambda^i$ expressing the linear dependency condition can be taken as 
%  $\lambda^i=(-1)^i\det\left(\hat v_i\right)$, where $\left(\hat v_{i_0}\right)$ is the matrix whose columns are the components of the 
%  vectors $v_1,\dots,v_{q}$, in this order, except for $v_{i_0}$. When $m>1$, we 
%Then the coefficients
%  $\lambda^i_a$ expressing the $m$ linear dependency conditions can be chosen as
%  $$
%  \lambda^i_a = \det\begin{pmatrix}a&b\cr c&d\cr\end{pmatrix}
%  $$
\end{lemma}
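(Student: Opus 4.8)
The plan is to prove Lemma~\ref{lem:v} by direct computation, verifying that the proposed coefficients $\lambda^i_a$ indeed annihilate every vector $v_i$ simultaneously (i.e. $\lambda^i_a v_i = 0$ for each $a$), and that they are not all zero. This is a Cramer-type identity, so the whole argument is essentially a clever bookkeeping of determinant expansions.

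First I would fix $a \in \{1,\dots,m\}$ and consider the $(N-m+1)\times(N-m)$ matrix $V_a$ whose columns are the components of $v_1,\dots,v_{N-m},v_{N-m+a}$ (in that order). Since $V_a$ has one more column than rows, its columns are linearly dependent, and the coefficients of a linear dependency are given by the signed maximal minors obtained by deleting one column at a time: the standard fact is that $\sum_{i=1}^{N-m+1}(-1)^i (\det V_{a,i})\, c_i = 0$ where $c_i$ is the $i$-th column of $V_a$. I would make this precise by the usual trick: for any fixed vector $w\in\R^{N-m}$, the $(N-m+1)\times(N-m+1)$ matrix obtained from $V_a$ by adjoining $w$ as a first (or last) column is square with a repeated... no — rather, expand along the adjoined column to see that $\sum_i (-1)^i(\det V_{a,i})\langle \text{row}\rangle$ vanishes because it equals the determinant of a matrix with two equal columns. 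Concretely, $\sum_{i=1}^{N-m+1}(-1)^{i}(\det V_{a,i})\,(V_a)_{\cdot,i}$ is the Laplace expansion of the determinant of the $(N-m+1)\times(N-m+1)$ matrix with columns $(V_a \mid (V_a)_{\cdot,j})$ for any $j$, which is zero. Matching the last column $(V_a)_{\cdot,N-m+1}$ with $v_{N-m+a}$ and using that $\det V_{a,N-m+1} = \det V_0$ (removing the last column of $V_a$ leaves exactly $v_1,\dots,v_{N-m}$), I get precisely $\sum_{i=1}^{N-m}(-1)^i(\det V_{a,i})\,v_i + (-1)^{N-m+1}(\det V_0)\,v_{N-m+a} = 0$, which is the asserted relation with $\lambda^i_a = (-1)^i\det V_{a,i}$, $\lambda^{N-m+a}_a = (-1)^{N-m+1}\det V_0$, and all other $\lambda^i_a = 0$.

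Next I would check non-triviality: since the first $N-m$ vectors $v_1,\dots,v_{N-m}$ are by hypothesis linearly independent, $\det V_0 \neq 0$, so the coefficient $\lambda^{N-m+a}_a = \pm\det V_0$ is nonzero; hence the relation is non-trivial for every $a$. Moreover the $m$ relations indexed by $a=1,\dots,m$ are linearly independent as vectors of coefficients in $\R^N$, because relation $a$ is the unique one (among these $m$) with a nonzero entry in position $N-m+a$. Finally I would record the structural observation that matters for the application: each $\lambda^i_a$ is, up to sign, a minor of the matrix of the $v_i$, hence a homogeneous polynomial of degree $N-m$ in the entries of the $v_i$; and since the $v_i$ themselves depend polynomially on the jet of $f$ (being partial derivatives of $f$), after the substitution $N = s_{n,r+1}$, $N-m = q$, $v_i = \partial_{A_i} f^{\cdot}$, the coefficients $\lambda^A_a(f)$ become homogeneous fully antisymmetric polynomials of degree $q$ in the components of $D^{r+1}f$, as claimed in the text preceding the lemma.

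The routine part is the determinant manipulation; I expect the only point requiring care is the sign bookkeeping — getting the exponents $(-1)^i$ versus $(-1)^{i+1}$ and the exponent $(-1)^{N-m+1}$ on $\det V_0$ exactly right — which amounts to being consistent about whether $v_{N-m+a}$ is adjoined as the last column of $V_a$ (as Definition~\ref{def:v} specifies) and about which column index is being deleted. There is no real obstacle here: the identity $\sum_i (-1)^{i}(\det V_{a,i})\,(V_a)_{\cdot,i} = 0$ is just the statement that a square matrix with a repeated column has zero determinant, read through Laplace expansion, and everything else is substitution.
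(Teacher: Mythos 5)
The paper states Lemma~\ref{lem:v} without proof, calling it ``elementary linear algebra,'' so there is no internal proof to compare against. Your argument proves the right thing by the right method (the Cramer/Laplace expansion identity for maximal minors of a matrix with one more column than rows), the sign bookkeeping $\lambda^i_a=(-1)^i\det V_{a,i}$ and $\lambda^{N-m+a}_a=(-1)^{N-m+1}\det V_0$ is correct, and the non-triviality observation via $\det V_0\neq0$ is exactly what is needed.

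One expository slip: the sentence claiming that ``$\sum_i(-1)^i(\det V_{a,i})(V_a)_{\cdot,i}$ is the Laplace expansion of the determinant of the $(N-m+1)\times(N-m+1)$ matrix with columns $(V_a\mid(V_a)_{\cdot,j})$'' is dimensionally off. Since $V_a$ is an $(N-m)\times(N-m+1)$ matrix, adjoining a column gives an $(N-m)\times(N-m+2)$ matrix, which is not square. What you want is to adjoin a \emph{row}: for each $k=1,\dots,N-m$, form the square $(N-m+1)\times(N-m+1)$ matrix obtained from $V_a$ by repeating its $k$-th row as a new last row; its determinant vanishes (two equal rows), and Laplace expansion along that last row yields
\[
\sum_{i=1}^{N-m+1}(-1)^{(N-m+1)+i}(\det V_{a,i})(v_i)_k=0,
\]
where $v_{N-m+1}$ is shorthand for $v_{N-m+a}$. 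Multiplying through by $(-1)^{N-m+1}$ and letting $k$ range over $1,\dots,N-m$ gives the vector identity $\sum_{i=1}^{N-m+1}(-1)^i(\det V_{a,i})v_i=0$, which, via $V_{a,N-m+1}=V_0$, is exactly the claimed relation. With this one correction your proof is clean and complete, and the closing structural remark about $\lambda^A_a(f)$ being a homogeneous degree-$q$ minor of $D^{r+1}f$ is an accurate reading of how the lemma feeds into Definition~\ref{def:Lf} and Lemma~\ref{lem:fi}.

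Incidentally, the paper's Definition~\ref{def:v} calls $V_\ell$ a ``$(N-m+1)\times(N-m)$ matrix whose $N-m+1$ columns are\ldots'' --- the stated shape and the stated number of columns are inconsistent. Your reading (columns are the vectors, so $V_\ell$ is $(N-m)\times(N-m+1)$) is the one that makes $V_{\ell,i}$ square as asserted, and is surely what was meant.
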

In order to ensure the existence of an inverse for $\cL_f$ we need to ensure the differential independence of its components.
Clearly such independence cannot take place when there are more components $\lambda^{\alpha A}_a(f)$ than independent variables $f^i$.
\begin{example}
  Consider a map $f:\R^3\to\R^2$ of full 1-rank (i.e. an immersion), so that the three vectors $f_x,f_y,f_z$ satisfy at every point
  a non-trivial linear dependence condition $\lambda f_x+\mu f_y+\nu f_z=0$, where 
% and the three $2\times2$ minors of its $2\times3$ Jacobian $D^1F$, namely
  $$
  \lambda=\det\begin{pmatrix}\partial_y f^1&\partial_z f^1\cr \partial_y f^2&\partial_z f^2\end{pmatrix}
  \,,\;
  \mu=\det\begin{pmatrix}\partial_x f^1&\partial_z f^1\cr \partial_x f^2&\partial_z f^2\end{pmatrix}
  \,,\;
  \nu=\det\begin{pmatrix}\partial_x f^1&\partial_y f^1\cr \partial_x f^2&\partial_y f^2\end{pmatrix}.
  $$
  The three functions $\lambda,\mu,\nu$ are mutually functionally independent, in the sense that there exists no $F\in C^0(\R^3)$ such that 
  $F(\lambda,\mu,\nu)=0$ identically on any open set, but they are not {\em differentially} independent since $\partial_x\lambda+\partial_y\mu+\partial_z\nu=0$.
\end{example}
%
%
%examples of differential relations
%
\begin{lemma}
  \label{lem:fi}
  Let $f:M\to\R^q$, $s_{n,r}<q<s_{n,r+1}$, be a generic map of full $(r+1)$-rank and set the $\lambda_a^{\alpha A}(f)$ as in Lemma~\ref{lem:v}. 
  Then, for any $s=0,1,\dots$, any $q$ distinct non-zero functions $\lambda_a^{\alpha A}(f)$ and their derivatives up to order 
  $s$ are functionally independent.
\end{lemma}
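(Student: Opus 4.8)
The key point is that the coefficients $\lambda_a^{\alpha A}(f)$ are, by Lemma~\ref{lem:v}, polynomials in the entries of $D^{r+1}f$, i.e. polynomials in the $(r+1)$-jet of $f$. Hence each non-zero $\lambda_a^{\alpha A}(f)$ and each of its derivatives up to order $s$ is a polynomial in the $(r+1+s)$-jet of $f$ evaluated at the point. The statement ``$q$ such functions are functionally independent for generic $f$'' is therefore a statement about the rank of a certain polynomial map defined on the jet bundle $J^{r+1+s}(M,\R^q)$: functional independence of $F_1,\dots,F_q$ at a point $x$ means that the differential of $(F_1,\dots,F_q)$, viewed as a map of the independent coordinates, has rank $q$ there, and by the implicit function theorem this rank condition, holding at one point, propagates to an open dense set. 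So the plan is: (1) fix $s$ and a choice of $q$ distinct non-zero coefficients $\lambda_{a_1}^{\alpha_1 A_1}(f),\dots,\lambda_{a_q}^{\alpha_q A_q}(f)$ together with a choice of derivatives $\partial_{B_1},\dots,\partial_{B_q}$, $|B_j|\le s$; (2) show that the $q$ functions $\partial_{B_j}\lambda_{a_j}^{\alpha_j A_j}(f)$, as functions on $J^{r+1+s}(M,\R^q)$, have independent differentials at some jet; (3) conclude by a standard Thom-transversality / genericity argument that the set of $f$ for which this fails at some point is meagre, and intersect over the finitely many choices of indices (for fixed $s$) and over $s$.

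\textbf{Key steps.}
First I would reduce to the case $s=0$ plus a bookkeeping argument: differentiating a polynomial in the $(r+1)$-jet $s$ times produces a polynomial in the $(r+1+s)$-jet which depends \emph{linearly} on the top-order $(r+1+s)$-jet variables (the derivatives of order exactly $r+1+s$ of $f$ enter linearly, through the chain rule applied to the leading term), with coefficients that are lower-order data; this linear dependence on fresh variables is exactly what makes the differentials independent. Concretely, for a chosen coefficient $\lambda_a^{\alpha A}(f)$, which by Lemma~\ref{lem:v} is (up to sign) a maximal minor of $D^{r+1}f$, its derivative $\partial_B\lambda_a^{\alpha A}(f)$ contains the monomial obtained by replacing, in that minor, one entry $\partial_C f^i$ by $\partial_{B+C} f^i$; for distinct choices of $(a_j,\alpha_j,A_j,B_j)$ one can arrange that these ``new'' high-order partials $\partial_{B_j+C_j}f^{i_j}$ are pairwise distinct jet coordinates, so that the Jacobian of $(\partial_{B_1}\lambda_{a_1}^{\alpha_1 A_1},\dots,\partial_{B_q}\lambda_{a_q}^{\alpha_q A_q})$ with respect to these $q$ coordinates is, at a suitably chosen jet, upper triangular with non-zero diagonal (the diagonal entries being the cofactors of the replaced entries, which are non-zero precisely because the map has full $(r+1)$-rank). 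Having exhibited one jet where the rank is $q$, I would invoke the polynomiality: the locus where the rank drops is a proper algebraic subset of each fiber of $J^{r+1+s}(M,\R^q)\to M$, hence of codimension $\ge 1$, and then the parametric transversality theorem (jets of generic $f$ avoid any submanifold of positive codimension in the jet space, here applied fiberwise) gives that for generic $f$ the rank is $q$ at every $x$, which is exactly functional independence.

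\textbf{Main obstacle.}
The genuinely delicate point is step (2): choosing $q$ distinct coefficients and $q$ multi-indices $B_j$ so that the ``new'' partials $\partial_{B_j+C_j}f^{i_j}$ they activate can be taken pairwise distinct, \emph{and} simultaneously so that the associated cofactors are non-vanishing at a common jet. Because $s_{n,r}<q<s_{n,r+1}$, there are enough linear relations ($m=s_{n,r+1}-q\ge 1$) and enough free entries of $D^{r+1}f$ to have room, but making the combinatorial selection uniform — and checking that the resulting $q\times q$ Jacobian submatrix is genuinely non-degenerate rather than merely generically non-zero entrywise — requires care; this is where the hypothesis that $f$ is of full $(r+1)$-rank (so that the relevant minors of $D^{r+1}f$ are non-zero and the cofactors do not all collapse) is used in an essential way. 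Once the combinatorial selection is pinned down, the remaining arguments are the routine algebraic-geometry-plus-transversality boilerplate already used repeatedly in the earlier sections (cf. Lemma~\ref{lem:UTS} and Theorem~\ref{thm:UTSrel}).
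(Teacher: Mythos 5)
Your approach is in the same spirit as the paper's: view the $\lambda_a^{\alpha A}(f)$ and their derivatives as polynomial functions on $J^{s+r+1}(M,\R^q)$, use that each $\lambda_a^{\alpha A}$ is (up to sign) a $q\times q$ minor of $D^{r+1}f$, and isolate ``fresh'' highest-order jet coordinates whose appearance in $\partial_B\lambda_a^{\alpha A}$ forces independence of the differentials. But two things are missing. First, a bookkeeping gap: the lemma asserts \emph{joint} independence of the $q$ chosen coefficients together with \emph{all} of their derivatives of order $\leq s$, roughly $q\binom{n+s}{s}$ functions in total, whereas your steps (1)--(2) fix a single derivative multi-index $B_j$ per index $j$ and establish independence only of such $q$-tuples; intersecting over choices of $q$-tuples does not upgrade this to joint independence of the whole family (just as $x$, $y$ and $x+y$ are pairwise but not jointly independent). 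The paper wedges $d\lambda_{a_iB}^{\alpha_i A_i}$ over all $i=1,\dots,q$ and all $|B|\leq s$ simultaneously.

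Second, the combinatorial step you flag as the ``Main obstacle'' is precisely where the paper inserts one decisive observation that you circle around but never pin down: each column of the $q\times q$ minor defining $\lambda_a^{\alpha A}$ is an entire vector $\partial_{A'}f\in\R^q$, so \emph{every} component $f^{i_0}$, $i_0=1,\dots,q$, appears at the maximal derivative order in each $\lambda_a^{\alpha A}$. One can therefore simply set $i_0 = i$ for the $i$-th chosen coefficient; then the activated coordinates $f^{i}_{A'_i+B}$ are pairwise distinct as $(i,B)$ ranges over $\{1,\dots,q\}\times\{B:|B|\leq s\}$, so $\bigwedge_{i,B}df^{i}_{A'_i+B}\neq 0$, and its coefficient in $\bigwedge_{i,B}d\lambda_{a_iB}^{\alpha_i A_i}$ is a product of cofactors, hence a non-zero polynomial in the jet coordinates. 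With that observation in place your plan closes and coincides with the paper's proof; the Thom-transversality step you defer to (3) plays the role of the paper's ``for a generic $f$''.
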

\begin{proof}
  To prove the claim of the lemma we need to consider the $j^s\lambda_a^{\alpha A}(f)$ as functions on $J^{s+r+1}(M,\R^q)$ and show that
  they are independent\footnote{Recall that some set of functions on a manifold is functionally independent if and only if
  their wedge product is not identically zero on any open set.} for a generic map $f$.

  First of all consider a single coefficient $\lambda_a^{\alpha A}(f)$ and let $\partial_{A'}f^{i_0}$ be one of the terms in it with 
  derivative of highest order (it can be either $|A'|=r$ or $|A'|=r+1$). Note that $i_0$ can be chosen as any integer between 1 and 
  $q$ since for every derivative order all components of $f$ with the same order enter in the expression of $\lambda_a^{\alpha A}(f)$. 
  Then $\partial_{A'B} f^{i_0}$ is one of the derivatives of highest order appearing in $\lambda_{aB}^{\alpha A}(f)$ and does 
  not appear in any other term $\lambda_{aB'}^{\alpha A}(f)$ with $|B'|\leq|B|$. This means that, in the expansion of
  $\bigwedge_{|B|\leq s}d\lambda_{aB}^{\alpha A}(f)$, it appears a single term proportional to $\bigwedge_{|B|\leq s}df^{i_0}_{A'B}$. The coefficient
  of this term is the product of some number of $f^{i_0}_A$ coordinates and therefore it is not identically zero on any open set, i.e. there is
  no differential relation between any $\lambda_a^{\alpha A}(f)$ and its derivatives.

  Now consider $q$ distinct non-zero coefficients $\{\lambda_{a_1}^{\alpha_1 A_1}(f),\dots,\lambda_{a_q}^{\alpha_q A_q}(f)\}$ and their derivatives
  up to order $B$. Then, for each $i=1,\dots,q$, there is a multi-index $A'_i$ such that $\bigwedge_{|B|\leq s}d\lambda_{a_iB}^{\alpha_i A_i}(f)$ 
  contains a non-zero term proportional to $\bigwedge_{|B|\leq s}df^{i}_{A'_iB}$. Moreover, since the $\lambda_{a_i}^{\alpha_i A_i}(f)$ are determinants 
  of matrices that differ by at least one column and whose columns are partial derivatives of $f$, each $\bigwedge_{|B|\leq s}df^{i}_{A'_iB}$ does
  not appear in $\lambda_{a_{i'}}^{\alpha_{i'} A_{i'}}(f)$ for $i'\neq i$. Hence $\bigwedge_{i=1,\dots,q}\left(\bigwedge_{|B|\leq s}d\lambda_{a_iB}^{\alpha_i A_i}(f)\right)$
  is not zero either because it contains $\bigwedge_{i=1,\dots,q}\bigwedge_{|B|\leq s}df^{i}_{A'_iB}\neq0$.
\end{proof}
%
%\begin{proposition}
%  If $q> ???$ and $f$ is a generic map of full $r+1$-rank, the restriction of $\cL_f$ to ... is a generic UTS PDO.
%\end{proposition}
%
\begin{theorem}
  \label{thm:Lf}
  Let $f:M\to\R^q$ be a generic map of full $r+1$-rank, $m=s_{n,r+1}-q$ and assume that 
%  $q\geq m(m+1)s_{n,r}$ 
  $q\geq m\sum_{i=0}^m {n-i\choose r}$ % \sum_{i=n-m}^n s_{i,r}$. 
  and $m\leq (n-1)/(r+1)$.
%$m<\min\{s_{n,r+1}-s_{n,r},(n-1)/r$ and $q=s_{n,r+1}-m\geq m(m+1)s_{n,r}$ and l. 
  Then the associated order-$r$ compatibility PDO $\cL_f$ is surjective.
\end{theorem}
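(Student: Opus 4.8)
The plan is to reduce the surjectivity of $\cL_f$ for a generic full $(r+1)$-rank map to the machinery of Section~4 on upper totally symmetric PDOs, specifically Theorem~\ref{thm:UTS} together with Remark~\ref{rm:sub}. The first step is to check that, for a generic $f$, the compatibility operator $\cL_f:C^r(M,\R^n)\to C^0(M,\R^m)$ fits (after a harmless relabeling of the $n$ unknowns $h_\alpha$, of which only $m+1$ will actually be needed, the rest having zero coefficients for a generic $f$) into the class $\cU^r_m(M)$ of UTS operators: its top-order coefficients $\lambda^{\alpha\beta_1\dots\beta_r}_a(f)$, coming from the order-$(r+1)$ part of the determinant expansions in Lemma~\ref{lem:v}, are obtained by differentiating $f$ in the directions $\alpha,\beta_1,\dots,\beta_r$, hence are symmetric in all those indices as long as the first is $\le m+1$. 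The hypothesis $m\le (n-1)/(r+1)$ is what guarantees there are enough directions available so that exactly $m+1$ of the $h_\alpha$'s suffice and the symmetry structure is the UTS one (this is the numerological input that must be matched against Definition~\ref{def:UTS}). The intermediate-order coefficients $0<r'<r$ are whatever they are; by Remark~\ref{rm:sub} we do not care, since all the results of Section~4 are insensitive to them and hold for any subbundle cut out by closed relations among those coefficients — and $\cL_f$, as $f$ ranges over full $(r+1)$-rank maps, traces out precisely such a subbundle.

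Second, I would invoke Theorem~\ref{thm:UTSrel}: there is an $\bar s=s(n,m,r)$ and, for every $s\ge\bar s$, a maximal algebraic subbundle $\cH^{r,s}_m(M)\subset J^{r+s}\cU^r_m(M)$ whose complement has codimension $>n$, such that any UTS PDO with $j^{r+s}\Lambda_r(M)\subset\cH^{r,s}_m(M)$ admits a $\cH^{r,s}_m(M)$-universal right inverse. The codimension-$>n$ statement is exactly what lets a generic section avoid the bad locus, so a generic $f$ gives a $\cL_f$ with a right inverse, hence surjective — this is the content of Theorem~\ref{thm:UTS} read off for our subbundle. The only thing left to verify is that the genericity conditions of Lemma~\ref{lem:fi} (differential independence of any $q$ of the nonzero coefficients $\lambda^{\alpha A}_a(f)$ together with their derivatives up to order $s$) are the same genericity conditions needed to push $j^{r+s}\Lambda_f$ into $\cH^{r,s}_m(M)$: the algebraic relations defining the complement of $\cH^{r,s}_m(M)$ are, by the construction in the proof of the first lemma of Section~4, vanishing of certain polynomials $P$ in the $(r+s)$-jet coordinates, and avoiding $\{P=0\}$ is an open dense condition on jets; functional independence from Lemma~\ref{lem:fi} is precisely what makes the pulled-back relation $P\circ j^{r+s}\cL_f$ a nonzero function of $f$ and its derivatives, so that generic $f$ avoids it. One also needs the inequality $q\ge m\sum_{i=0}^m\binom{n-i}{r}$: this is the bound ensuring $(m+1)\binom{n+s}{s}>m\binom{n+r+s}{r+s}$ can be met — i.e. that there are more unknowns $\overline M^{aA}_\alpha$ than equations in the algebraic system \eqref{eq:ML}, which is the hypothesis under which the first lemma of Section~4 produces the formal inverse at all; here it enters because the "ambient" dimension available to $\cL_f$ is governed by $q$ versus the UTS count $m\sum_{i=0}^m\binom{n-i}{r}$.

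The main obstacle I expect is the bookkeeping in the first step: verifying honestly that the top-order symbol of $\cL_f$ genuinely has the UTS symmetry type of Definition~\ref{def:UTS}, and that the passage from "$n$ unknowns $h_\alpha$" to "$m+1$ effectively active unknowns" is valid for generic $f$ and compatible with $m\le(n-1)/(r+1)$. In Lemma~\ref{lem:v} the coefficients $\lambda^{\alpha A}_a(f)$ are cofactor determinants of a matrix whose columns are partial derivatives $\partial_A f^i$; one must track which determinants are forced to vanish (those are the "inactive" $h_\alpha$) and confirm that among the surviving ones the highest-order part is symmetric precisely in the index block with first entry $\le m+1$, matching the UTS definition and not some larger or smaller symmetry group. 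Once that identification is pinned down, the rest is a direct citation of Theorems~\ref{thm:UTSrel}--\ref{thm:UTS}, Remark~\ref{rm:sub}, and Lemma~\ref{lem:fi}, with the two displayed inequalities in the hypothesis doing exactly the two jobs described above (enough unknowns in the algebraic system; genericity/codimension to land in $\cH^{r,s}_m(M)$).
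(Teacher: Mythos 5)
Your overall reduction — rename coordinates so only $h_1,\dots,h_{m+1}$ are active, recognize the top‐order symbol as UTS, cite Remark~\ref{rm:sub} to dispose of the intermediate‐order terms, and then invoke Theorem~\ref{thm:UTSrel}/Theorem~\ref{thm:UTS} with Lemma~\ref{lem:fi} supplying genericity — is indeed the paper's route. But you misidentify the role of the hypothesis $q\ge m\sum_{i=0}^m\binom{n-i}{r}$, and this is a genuine gap. You attribute it to the inequality $(m+1)\binom{n+s}{s}>m\binom{n+r+s}{r+s}$ in the first lemma of Section~4, but that inequality depends only on $m,n,r,s$ and can \emph{always} be satisfied by taking $s$ large — the lemma says so explicitly and needs no constraint on $q$. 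The actual job of $q\ge m\sum_{i=0}^m\binom{n-i}{r}$ is elsewhere: after restricting to $h_{m+2}=\dots=h_n=0$, the resulting UTS PDO has exactly $m\sum_{i=0}^m\binom{n-i}{r}$ non‐trivial top‐order coefficients, all of which must be functionally independent (together with their derivatives) so that a generic $f$ pushes $j^{r+s}\Lambda_f$ into the good subbundle $\cH^{r,s}_m(M)$. Lemma~\ref{lem:fi} delivers that independence only for collections of at most $q$ distinct non‐zero $\lambda$'s, which is why the number of top‐order coefficients must be bounded by $q$. If you run the argument with your reading, the hypothesis looks superfluous and the genericity step is left unjustified.

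A second, smaller point: the ``bookkeeping'' you flag as your main uncertainty is handled in the paper not by the vanishing of coefficients for a generic $f$, but by a coordinate renaming made possible by $m\le(n-1)/(r+1)$, i.e.\ $n\ge m+1+rm$: one arranges that every identically‐vanishing $\lambda^{\alpha_1\dots\alpha_{r+1}}_a(f)$ has all indices $>m+1$, so that restricting to the linear subspace $h_{m+2}=\dots=h_n=0$ drops only zero coefficients, and the surviving operator is a UTS PDO (with the UTS symmetry in fact holding at every order, not only the top one) whose $m\sum_{i=0}^m\binom{n-i}{r}$ top‐order components are all non‐trivial. That is what feeds cleanly into the count above. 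Your phrase ``the rest having zero coefficients for a generic $f$'' gets the mechanism backwards: the vanishing is a structural consequence of Lemma~\ref{lem:v}, and the coordinate choice is what lets one discard exactly those without losing information.
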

\begin{proof}
%  Since $f$ is of full $r+1$-rank, at every point of $M$ there will be $s_{n,r+1}-s_{n,r}-m$ vectors $\partial_A f$, $|A|=r+1$, that are
%  linearly independent. Moreover, since $n\geq m+1+r m$, in some neighborhood of any point we can always rename coordinates so that 
%  the coefficients $\lambda^{\alpha_1\dots\alpha_{r+1} }_a(f)$ equal to zero have $\alpha_\ell>m+1$ for all $\ell=1,\dots,r+1$.
  Since $f$ is of full $r+1$-rank, at every point of $M$ are linearly independend all vectors $\partial_A f$, $|A|\leq r$ and 
  $s_{n,r+1}-s_{n,r}-m$ of the $\partial_A f$, $|A|=r+1$, 
%  there will be $s_{n,r+1}-s_{n,r}-m$ vectors $\partial_A f$, $|A|=r+1$, that are
%  linearly independent 
  namely at every point hold $m$ linear dependence relations $\sum_{|A|\leq r+1}\lambda^{A}_a \partial_A f$. 
  Moreover, since $n\geq m+1+r m$, in some neighborhood of any point we can always rename coordinates so that 
  the components $\lambda^{\alpha_1\dots\alpha_{r+1} }_a(f)$ equal to zero have $\alpha_\ell>m+1$ for all $\ell=1,\dots,r+1$.

  At this point, if we restrict the corresponding compatibility PDO $\cL_f:C^r(M,\R^n)\to C^0(M,\R^m)$ to the linear subspace
  $h_{m+2}=\dots=h_n=0$, we get a UTS PDO (that we will keep calling $\cL_f$ to keep the notation light) of special kind, namely such that 
  the UTS condition is true for its components of {\em every} order, and whose 
%  $m(m+1)s_{n,r}\leq q$ 
  $m\sum_{i=0}^m {n-i\choose r}$ % \sum_{i=n-m}^n s_{i,r}\leq q$
  components of order $r$ are all non-trivial. As explained in Remark~\ref{rm:sub}, the statements of Theorem~\ref{thm:UTSrel} 
  and~\ref{thm:UTS} also hold for this more special kind of operators and so there is a $s=\hat s(n,m,r)$ and a maximal algebraic 
  subbundle $\hat\cH^{r,s}_m(M)$ of $J^{r+s}(Hom(J^r(M,\R^q),J^0(M,\R^q)))$ such that the codimension of its complement is $n$ and
  $\cL_f$ is surjective if $j^{r+s}\Lambda_f(M)\subset\hat\cH^{r,s}_m(M)$. By hypothesis the components of $\cL_f$ are all 
  independent and so for, a generic $f$, $\cL_f$ is surjective.
%By Lemma~\ref{lem:fi} there are no differential relations among such components
%  so that, for generic maps $f$, $j^{r+s}\Lambda_f(M)$ will not meet the subbundle $\cH^{r,s}_m(M)$ of Theorem~\ref{thm:UTSrel} 
%  and so $\cL_f$ is surjective.
\end{proof}
%

%
%\section{Existence of full $k$-rank maps}
\section{Full rank isometries}
\label{sec:mr}
In the particular case of full 2-rank maps, Theorem~\ref{thm:Lf} has the following important consequences
about isometric immersions. %Set $s_n=s_{n,2}=n(n+1)/2$, d
Given $n=1,2,\dots$, denote by $m_n$ the middle root of $n+s_n-m=m(m+1)(n+1)-ms_m$. 
% and note that $\sqrt{n/2}-1/2\leq m_n\leq\sqrt{n/2}$.
%about isometric immersions. Set $s_n=s_{n,2}=n(n+1)/2$ and $m_n = \frac{\sqrt{(n+2)^2+2n(n+1)(n+3)}-(n+2)}{2(n+1)}$.
%
\begin{definition}
  Let $s(n,m,r)$ and $$\cH^{1,s}_m(M)\subset J^{1+s}\left(Hom\left(J^1(M,\R^q),J^0(M,\R^q)\right)\right),\;q=n+s_n-m,$$
  be the function and the subbundle in the claim of Theorem~\ref{thm:UTSrel}. 
  For every $s\geq s(n,m,1)$, we denote by $F^{s+3}(M,\R^q)\subset C^{s+3}(M,\R^q)$ the open set of full 2-rank maps $f$ such that $j^{s+1}\Lambda_f(M)\subset\cH^{1,s}_m(M)$.
\end{definition}
\begin{proposition}
  \label{prop:dense}
  The set $F^{s+3}(M,\R^q)$ is dense in the set of $C^{s+3}$ full 2-rank maps for $q\geq  n+s_n-m_n$. %In particular, this is true for $q\geq n+s_n-\sqrt{n/2}$.% and $m<(n-1)/(r+1)$.
\end{proposition}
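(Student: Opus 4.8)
The plan is to deduce density from two ingredients: (i) the fact, established in Lemma~\ref{lem:fi}, that for a \emph{generic} full $2$-rank map the coefficients $\lambda^{\alpha A}_a(f)$ together with their derivatives up to any fixed order $s$ are functionally independent; and (ii) the fact, from Theorem~\ref{thm:UTSrel}, that the complement of $\cH^{1,s}_m(M)$ inside $J^{1+s}\cU^1_m(M)$ is a (closed) algebraic subbundle of codimension larger than $n$. The bridge between the two is a Thom transversality argument: the map $f\mapsto j^{s+1}\Lambda_f$ sends $C^{s+3}$ full $2$-rank maps into sections of $J^{1+s}\cU^1_m(M)$, and functional independence of the $j^s\lambda^{\alpha A}_a(f)$ is exactly the statement that this assignment is a \emph{submersion onto the fibers} at generic jets — so that the jet-prolongation of a generic $f$ misses any fixed submanifold of fiber-codimension $>n = \dim M$.

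First I would set up the jet-space picture carefully. Fix $s\ge s(n,m,1)$ and work over $J^{s+3}(M,\R^q)$. The assignment $f\mapsto\Lambda_f$ is, by Lemma~\ref{lem:v}, \emph{polynomial} in the $(s+2)$-jet of $f$ (the $\lambda^A_a$ are determinants of matrices of partial derivatives of $f$, hence the $j^s$ of them are polynomial in $j^{s+2}f$), so it defines a smooth bundle map $\Phi: J^{s+2}(M,\R^q)\supset U \to J^{s+1}\cU^1_m(M)$, where $U$ is the open set of $(s+2)$-jets of full $2$-rank maps. The content of Lemma~\ref{lem:fi} is that, restricted to a single fiber of $J^{s+2}(M,\R^q)\to M$, the differential of $\Phi$ is surjective onto the fiber directions of $J^{s+1}\cU^1_m(M)$ at a generic point — indeed the lemma produces, for $q$ prescribed coefficients, $q$ transverse ``wedge'' directions $\bigwedge_{|B|\le s}df^i_{A'_iB}$, which is exactly a full set of independent fiber coordinates downstairs mapping onto independent fiber coordinates upstairs. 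Consequently $\Phi$ is transverse to the bad set $Z = J^{s+1}\cU^1_m(M)\setminus\cH^{1,s}_m(M)$ at all but a proper algebraic subvariety $W\subset U$, and $\codim_U(\Phi^{-1}(Z)\cup W) \ge \min(\codim Z, \codim W) > n$ by Theorem~\ref{thm:UTSrel}.

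Next I would invoke Thom's jet transversality theorem. The bad set upstairs, $\Sigma := \Phi^{-1}(Z)\cup W \subset J^{s+2}(M,\R^q)$, is (the intersection with $U$ of) an algebraic, hence stratified, subset of fiber-codimension $>n$; enlarging it by the closed complement of $U$ itself (the non-full-$2$-rank locus, which is also of positive codimension and whose complement is the relevant open set) does not change this. By jet transversality, for a residual — in particular dense — set of $f\in C^{s+3}(M,\R^q)$ the prolongation $j^{s+2}f: M\to J^{s+2}(M,\R^q)$ is transverse to every stratum of $\Sigma$; since each stratum has codimension $>n=\dim M$, transversality forces $j^{s+2}f(M)\cap\Sigma=\varnothing$. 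For such $f$ we have simultaneously: $f$ is of full $2$-rank, $j^{s+1}\Lambda_f(M) = \Phi\circ j^{s+2}f(M) \subset \cH^{1,s}_m(M)$, i.e. $f\in F^{s+3}(M,\R^q)$. Finally, density \emph{within} the set of $C^{s+3}$ full $2$-rank maps follows because that set is open in $C^{s+3}(M,\R^q)$ (it is defined by the open condition $\rk D^2 f = q$) and we have produced a dense — indeed residual — subset of the ambient space lying inside it; the intersection of a dense set with a nonempty open set is dense in that open set. The hypothesis $q\ge n+s_n-m_n$ is precisely what guarantees $m=n+s_n-q\le m_n$, which is the range in which Theorem~\ref{thm:UTSrel} delivers fiber-codimension $>n$ for the complement of $\cH^{1,s}_m(M)$, so this inequality is used exactly once, to certify the codimension bound above.

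\textbf{Main obstacle.} The delicate point is not the transversality machinery but checking that the \emph{functional} independence statement of Lemma~\ref{lem:fi} really translates into the \emph{submersivity of $\Phi$ onto fiber directions} with the bad locus $W$ (where it fails) being of codimension $>n$ rather than merely nowhere dense — one must track the degrees/denominators appearing in Lemma~\ref{lem:v} and in the definition of $\cH^{1,s}_m(M)$ to see that the locus where the ``wedge-coefficient'' polynomials of Lemma~\ref{lem:fi} vanish, intersected with the fibers, is genuinely thin enough. Equivalently: one needs that the coefficients $\lambda^{\alpha A}_a(f)$ can be simultaneously non-vanishing and functionally independent on a set whose jet-complement has codimension $>n$, which is where the numeric hypotheses $q\ge m\sum_{i=0}^m\binom{n-i}{1}$ (equivalently $q\ge n+s_n-m_n$ after substituting $m=n+s_n-q$) enter in tandem with $m\le(n-1)/2$. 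Packaging these codimension estimates cleanly — rather than the transversality argument itself, which is standard — is the real work.
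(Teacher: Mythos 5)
Your proposal takes essentially the same route as the paper: the paper's proof of Proposition~\ref{prop:dense} consists of verifying the numerical equivalence $q\ge n+s_n-m_n\iff q\ge m\sum_{i=0}^m\binom{n-i}{1}$ together with $m_n\le(n-1)/2$, and then citing the proof of Theorem~\ref{thm:Lf}, which itself rests on Lemma~\ref{lem:fi} (functional independence of the $\lambda$'s and their $s$-jets) plus Theorem~\ref{thm:UTSrel} (fiber-codimension $>n$ of the complement of $\cH^{1,s}_m(M)$) combined through an implicit Thom jet-transversality step — exactly the three ingredients you identify, with you merely making the transversality step explicit. The only slip is a harmless off-by-one in the jet orders: since the $\lambda^A_a(f)$ are polynomial in $j^2f$, the condition $j^{1+s}\Lambda_f(M)\subset\cH^{1,s}_m(M)$ factors through $j^{s+3}f$ (whence the notation $F^{s+3}$ and the requirement $f\in C^{s+3}$), not through $j^{s+2}f$ as your map $\Phi$ suggests.
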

\begin{proof}
  The condition $q\geq n+s_n-m_n$ is equivalent to $q\geq m\sum_{i=0}^m {n-i\choose 1}$ and $m_n\leq(n-1)/2$ for all $n\geq2$.
  %  is satisfied because we have that $m_n\leq(n-1)/2$ for $n\geq2$.
  Hence the proof of Theorem~\ref{thm:Lf} shows that, 
%    Note that the condition 
%  in Theorem~\ref{thm:Lf}, $m\leq(n-1)/2$, in this case in inessential because we have that $m_n\leq(n-1)/2$ for $n\geq2$. The proof of Theorem~\ref{thm:Lf} shows that, 
  under such hypotheses, a generic $f$ will induce a compatibility UTS PDO $\cL_f$ such that $j^{1+s}\Lambda_f(M)$ will be entirely contained in $\cH^{1,s}_m(M)$.
\end{proof}
\begin{theorem}
  \label{thm:infinv}
  %  For every $m\leq m_n$, 
 % The metric inducing operator $\cD_{M,q}$ admits an infinitesimal inversion of order 0 and defect $d=3+s(n,m,1)$, $m=n+s_n-q$, over $F^d(M,\R^q)$
  For $q\geq  n+s_n-m_n$, there is a $d=d(n,m)\geq nm+3$, $m=n+s_n-q$, such that the metric inducing operator $\cD_{M,q}$ admits an infinitesimal 
  inversion of order 0 and defect $d$ over $F^d(M,\R^q)$.
%  inversion of order 0 and defect $d\geq nm+3$, $m=n+s_n-q$, over $F^d(M,\R^q)$
%  for $q\geq  n+s_n-m_n$.
\end{theorem}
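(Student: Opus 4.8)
The plan is to combine the Nash reduction with the universal right inverse of the compatibility operator produced by Theorem~\ref{thm:UTSrel}, and then carry out the order-and-defect bookkeeping.

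First I would recall the two-step shape of the Nash reduction used around~(\ref{eq:NashTrick}). Set $m=n+s_n-q$ and fix a full $2$-rank map $f$. Inverting the linearization $T_f\cD_{M,q}$ amounts to: (i) producing auxiliary functions $h=(h_1,\dots,h_n)$ with $\cL_f(h)=\rho_f(\delta g)$, where $\rho_f(\delta g)$ is the $\R^m$-valued section with components $\tfrac12\sum_{\alpha\le\beta}\lambda^{\alpha\beta}_a(f)(\delta g)_{\alpha\beta}$, an operator algebraic in $\delta g$ and of order $\le2$ in $f$; and then (ii), using that full $2$-rank makes the $(n+s_n)\times q$ matrix $D^2f$ of first and second derivatives have rank $q$, solving the now-compatible algebraic system~(\ref{eq:NashTrick}) for $\delta f$, which is the unique vector $(D^2f)^{+}\bigl(h_\alpha;\ \partial_\alpha h_\beta+\partial_\beta h_\alpha-(\delta g)_{\alpha\beta}\bigr)$ obtained by applying the left pseudo-inverse $(D^2f)^{+}$ (algebraic, with coefficients of order $\le2$ in $f$). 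Any $\delta f$ so produced solves $T_f\cD_{M,q}(\delta f)=\delta g$, so the whole construction reduces to a good choice of $h$.

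For step (i) I would invoke the previous sections. Assuming $q\ge n+s_n-m_n$, Proposition~\ref{prop:dense} gives that the set of full $2$-rank maps $f$ with $j^{s+1}\Lambda_f(M)\subset\cH^{1,s}_m(M)$ — where $s=s(n,m,1)$ is the integer of Theorem~\ref{thm:UTSrel} — is open and dense, and that for such $f$ the operator $\cL_f$, restricted after a local coordinate renaming to $\{h_{m+2}=\dots=h_n=0\}$, is a UTS PDO to which Theorem~\ref{thm:UTSrel} applies, furnishing a $\cH^{1,s}_m(M)$-universal right inverse $\cM_s$: a PDO of order $s+1$ in the jets of $\Lambda_f$ (hence of order $\le s+3$ in $f$) and of order $s$ in its second slot. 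I would then set $h:=\cM_s(\Lambda_f)\bigl(\rho_f(\delta g)\bigr)$, so $\cL_f(h)=\rho_f(\delta g)$ and $h_{m+2}=\dots=h_n=0$, and define $\cE_f(\delta g)$ to be the $\delta f$ of step (ii) for this $h$. Then $T_f\cD_{M,q}(\cE_f(\delta g))=\delta g$ by construction; $\cE$ is linear in $\delta g$; and $\cE(f,\delta g)=\cE_f(\delta g)$ is a PDO in $f$ whose dominant term is the $\partial h$ of~(\ref{eq:NashTrick}), dragging in $\partial^{s+4}f$ through the coefficients of $\cM_s$. Since $s\ge n/((1+1/m)-1)=nm$ by Theorem~\ref{thm:UTSrel}, the resulting defect $d=d(n,m)$ satisfies $d\ge nm+3$, and the domain $F^d(M,\R^q)$ is an open differential relation (full $2$-rank plus the open condition $j^{s+1}\Lambda_f(M)\subset\cH^{1,s}_m(M)$), dense by Proposition~\ref{prop:dense} — so properties~(1)--(2) of an infinitesimal inversion hold, and (3) holds by the construction.

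The main obstacle is the remaining assertion that $\cE$ is of order $0$ in $\delta g$, i.e.\ \emph{algebraic} in $\delta g$ as in Nash's free-map inverse, even though $\cM_s$ a priori differentiates its argument $s$ times. What one must show is that the $\delta g$-derivatives $\cM_s$ introduces are annihilated when passed through $(D^2f)^{+}$: concretely, that the part of the right-hand side of~(\ref{eq:NashTrick}) carrying positive-order jets of $\delta g$ always lies in $\ker(D^2f)^{+}$, i.e.\ in the span of the relation coefficients $(\lambda^\alpha_a,\lambda^{\gamma\delta}_a)$ — this should follow from reading the identity $\cL_f(h)=\rho_f(\delta g)$ at the level of leading jets, using also the freedom in $\cM_s$ (right inverses of a fixed surjective PDO form an affine space). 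One also has to patch the locally defined $\cM_s$ into a single global operator — a partition of unity does it, once more via that affine structure, so that the patched operator still inverts $\cL_f$ — and to check that $f\mapsto\cL_f$, hence $f\mapsto\cE_f$, is differential of the claimed order. No new genericity or transversality is needed: all of it is already in Proposition~\ref{prop:dense} and Theorem~\ref{thm:UTSrel}.
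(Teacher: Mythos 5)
Your reduction is the same as the paper's: Nash's rewriting of the linearized equation as the algebraic system~(\ref{eq:NashTrick}), a choice of $h$ via the universal right inverse of the compatibility operator $\cL_f$ furnished by Proposition~\ref{prop:dense} and Theorem~\ref{thm:UTSrel}, and the defect count $d=\bar s+3\geq nm+3$ coming from the regularity of $f$ needed to evaluate $j^{s+1}\Lambda_f$. The paper's proof is essentially the bookkeeping you describe and, notably, simply asserts that the resulting inversion has order $0$ in $\delta g$ without further comment; you are right to flag that this is exactly the point requiring an argument, since $h=\cM_s(\rho_f(\delta g))$ a priori involves $s$ derivatives of $\delta g$ and the right-hand side of~(\ref{eq:NashTrick}) then involves $s+1$.

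However, the resolution you propose does not work. Writing $R$ for the right-hand side of~(\ref{eq:NashTrick}) and decomposing $R=R^{(0)}+R^{(+)}$ into the part algebraic in $\delta g$ and the part depending on $\partial^{\geq1}\delta g$, you claim $R^{(+)}\in\ker(D^2f)^{+}$, i.e.\ $R^{(+)}\in\operatorname{span}(\lambda_a)$. But the compatibility relations $\lambda^A_a R_A=0$, being identities in $\delta g$ with coefficients independent of $\delta g$, hold separately at each jet order, so in particular $\lambda^A_a R^{(+)}_A=0$: $R^{(+)}$ is \emph{orthogonal} to $\operatorname{span}(\lambda_a)$, i.e.\ lies in the column space of $D^2f$, which is precisely the complement of $\ker(D^2f)^{+}$. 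Since $D^2f$ is injective, the solution $\delta f=(D^2f)^{+}R$ is the unique preimage of $R$, so $(D^2f)^{+}R^{(+)}\neq0$ whenever $R^{(+)}\neq0$, and a generic $\cM_s$ does produce $R^{(+)}\neq0$. Varying the left inverse $(D^2f)^{+}$ cannot help either, because two left inverses differ by a map supported on $\operatorname{span}(\lambda_a)$ and therefore agree on compatible right-hand sides. Hence, as built, $\cE_f$ has order $s+1$ in $\delta g$, and the passage from that to order $0$ is a genuine gap in your argument (one which, as far as the quoted proof goes, the paper does not fill either: it needs a separate justification, or the statement needs the order replaced by $s+1$ and Theorem~\ref{thm:isom} adjusted via Theorem~\ref{thm:IFT} accordingly).
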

\begin{proof}
  By Proposition~\ref{prop:dense}, $\cL_f$ is surjective for every $f\in F^d(M,\R^q)$. By Theorem~\ref{thm:UTSrel}, the 
  order of its $\cH^{1,s}_m(M)$-universal right inverse $\cM_s$ is at least $\bar s=s(n,m,1)\geq mn$. Since under such conditions 
  $\cL_f$ is surjective, we can choose the $h_\alpha$ in system~(\ref{eq:NashTrick}) so that the compatibility condition~(\ref{eq:Lf(h)=g}) 
  is satisfied. In order to solve~(\ref{eq:NashTrick}) this way, we need the full 2-rank map $f$ to be at least of class $C^{\bar s+1+2}$, 
  since we must be able to evaluate $j^{s+1}\Lambda_f(M)$ and the components of $\cL_f$ are polynomials in the components of $j^2f$.
  Hence the infinitesimal inverse operator of $\cD_{M,q}$ has order 0 and defect $d=\bar s+3\geq nm+3$ over $F^d(M,\R^q)$.
\end{proof}
From this last result and Gromov's Theorem~\ref{thm:IFT} we get the following two results:
\begin{theorem}
  \label{thm:isom}
  Let $q\geq n+s_n-m_n$ and $d=d(n,m)$. If $f_0\in F^{d}(M,\R^q)\cap C^\infty(M,\R^q)$ and $g_0=\cD_{M,q}f_0$, then there is a 
  $C^{d+1}$-neighborhood of zero $U\subset\Gamma^{d+1}(S_2^0(M))$ such that, for all $g\in U\cap\Gamma^{\sigma}(S_2^0(M))$, $\sigma\geq d+1$, 
  the metric $g_0+g$ can be induced by an immersion $f\in C^{\sigma}(M,\R^q)$.
%  Let $f_0\in\cF^d(M,\R^q)\cap C^\infty(M,\R^q)$ be a generic $C^\infty$ map of full $2$-rank, $g_0=\cD_{M,q}(f_0)$ and
%  Let $f_0\in\cF^d(M,\R^q)\cap C^\infty(M,\R^q)$, $g_0=\cD_{M,q}(f_0)$ and
%  $s_{n,2}>q\geq s_{n,1}+\frac{\sqrt{(s_{n,r}+2)^2+4s_{n,r+1}(s_{n,r}+1)}-(s_{n,r}+2)}{2(s_{n,r}+1)}$. 
%  $q\geq s_{n,1}+\frac{\sqrt{(s_{n,r}+2)^2+4s_{n,r+1}(s_{n,r}+1)}-(s_{n,r}+2)}{2(s_{n,r}+1)}$. 
%  $q\geq n+s_n-m_n$.
% $m=n(n+1)/2-q$. 
%  If $m<\frac{\sqrt{(s_{n,r}+2)^2+4s_{n,r+1}(s_{n,r}+1)}-(s_{n,r}+2)}{2(s_{n,r}+1)}$ 
%  Then the $C^\sigma$ metric $g_0+g$ can be induced by a $C^\sigma$ immersion of $M$ into $\Rq$ for all $g$ that are ``$C^\sigma$-small enough'' 
%  and every $\sigma\geq3$. 
\end{theorem}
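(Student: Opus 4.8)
The strategy is to reduce Theorem~\ref{thm:isom} to an application of Gromov's Implicit Function Theorem (Theorem~\ref{thm:IFT}) together with the infinitesimal inversion just established in Theorem~\ref{thm:infinv}. First I would fix the data: $q \geq n+s_n-m_n$, $d=d(n,m)$, $f_0 \in F^d(M,\R^q)\cap C^\infty(M,\R^q)$, and $g_0 = \cD_{M,q}(f_0)$. By Theorem~\ref{thm:infinv}, the metric inducing operator $\cD_{M,q}$ admits an infinitesimal inversion $\cE$ of order $s=0$ and defect $d$ over the open set $F^d(M,\R^q)$; this set is defined by the open differential relation $j^{s+1}\Lambda_f(M)\subset\cH^{1,s}_m(M)$ together with the full 2-rank condition, so condition~(1) of the definition of infinitesimal invertibility holds, and conditions~(2) and~(3) hold by construction of $\cE$ from the universal right inverse of the compatibility operator.

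Next I would identify the ambient bundles so that Theorem~\ref{thm:IFT} applies verbatim: take $E=M$, $F=M\times\R^q$ (so $\Gamma^r F = C^r(M,\R^q)$), and $G = S^0_2(M)$ the bundle of symmetric $(0,2)$-tensors (so $\Gamma^0 G = \Gamma^0(S^0_2(M))$). The operator $\cD_{M,q}:\Gamma^r F\to\Gamma^0 G$ is of order $r=1$. Plugging $r=1$ and $s=0$ into the theorem, the relevant regularity threshold is $\bar s = \max\{d, 2r+s\} = \max\{d,2\} = d$, since $d = d(n,m)\geq nm+3 > 2$. The theorem then yields a neighborhood of zero $U\subset\Gamma^{\bar s + s + 1}G = \Gamma^{d+1}(S^0_2(M))$ such that, for every $g\in U\cap\Gamma^{\sigma+s}G = U\cap\Gamma^{\sigma}(S^0_2(M))$ with $\sigma\geq\bar s+1 = d+1$, the equation $\cD_{M,q}(f) = \cD_{M,q}(f_0) + g = g_0 + g$ has a $C^\sigma$ solution $f\in C^\sigma(M,\R^q)$. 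Unwinding the definition of $\cD_{M,q}$, this solution $f$ is precisely a $C^\sigma$ immersion of $M$ into $\R^q$ inducing the metric $g_0+g$, which is the assertion of the theorem.

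The only genuinely substantive point, and the one I would expect to require the most care, is checking that the hypotheses of Theorem~\ref{thm:IFT} are met on the nose — in particular that the infinitesimal inverse supplied by Theorem~\ref{thm:infinv} has defect exactly the $d=d(n,m)$ appearing in the statement (so that $f_0\in F^d(M,\R^q)$ is the right hypothesis and the solution lands in the right regularity class), and that $f_0\in\cA^d\cap\Gamma^\infty F$ as the theorem requires, which is exactly our assumption $f_0\in F^d(M,\R^q)\cap C^\infty(M,\R^q)$. Once the bookkeeping of orders and defects is matched, there is nothing left to prove: the result is a direct translation of Gromov's abstract theorem into the language of isometric immersions, entirely parallel to how the classical Nash theorem is recovered in the excerpt from the same machinery applied to free maps.
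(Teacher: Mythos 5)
Your proposal is correct and takes exactly the same route as the paper: Theorem~\ref{thm:isom} is obtained by applying Gromov's Implicit Function Theorem (Theorem~\ref{thm:IFT}) to the order-$0$, defect-$d$ infinitesimal inversion of $\cD_{M,q}$ over $F^d(M,\R^q)$ established in Theorem~\ref{thm:infinv}, and your bookkeeping ($r=1$, $s=0$, $\bar s=\max\{d,2\}=d$, giving the $C^{d+1}$-neighborhood and regularity threshold $\sigma\geq d+1$) matches the paper's statement.
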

\begin{corollary}
  \label{cor:open}
  For $q\geq n+s_n-m_n$, the metric inducing operator $\cD_{M,q}$ is open over the set $F^{\infty}(M,\R^q)=F^{d}(M,\R^q)\cap C^\infty(M,\R^q)$, $d=d(m,n)$.
\end{corollary}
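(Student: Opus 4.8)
The final statement to prove is Corollary~\ref{cor:open}.

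\medskip

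\noindent\textbf{Proof plan for Corollary~\ref{cor:open}.}

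The plan is to deduce this directly from Theorem~\ref{thm:infinv} together with the corollary to Gromov's implicit function theorem (the Corollary following Theorem~\ref{thm:IFT}). First I would recall the setup: by Theorem~\ref{thm:infinv}, for $q \geq n + s_n - m_n$ there is an integer $d = d(n,m) \geq nm+3$, with $m = n+s_n-q$, such that $\cD_{M,q}$ admits an infinitesimal inversion of order $0$ and defect $d$ over the open set $\cA^d = F^d(M,\R^q)$, which is carved out of $C^d(M,\R^q)$ by the open differential relation ``full $2$-rank and $j^{s+1}\Lambda_f(M)\subset\cH^{1,s}_m(M)$'' (with $s = d-3$). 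So the hypotheses of Theorem~\ref{thm:IFT} are met with $r=1$ (the metric-inducing operator is second order, so actually $r=2$; I should be careful here — $\cD_{M,q}$ has order $2$, so one applies the theorem with $r=2$, $s=0$, and the defect $d$ from Theorem~\ref{thm:infinv}).

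Next I would invoke the Corollary to Theorem~\ref{thm:IFT}, which states that under exactly these hypotheses the restriction of the operator to $\cA^\infty = \cA^d \cap \Gamma^\infty$ is an open map. Applying this with $\cA^d = F^d(M,\R^q)$ gives immediately that $\cD_{M,q}$ restricted to $F^d(M,\R^q)\cap C^\infty(M,\R^q)$ is open, which is exactly the set $F^\infty(M,\R^q)$ in the statement. The one bookkeeping point to verify is that $F^\infty(M,\R^q)$ does not depend on the choice of $d$ beyond the threshold: since $F^{d}(M,\R^q)$ for larger $d$ is defined by the same open relation tested against smooth maps, and a smooth map lies in $F^d$ for the minimal admissible $d$ iff it lies in all larger ones (the jet condition stabilizes), intersecting with $C^\infty$ gives a well-defined set independent of the specific $d(n,m)$, so the notation $F^\infty(M,\R^q)$ is unambiguous.

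I would also note, for completeness, that $F^\infty(M,\R^q)$ is non-empty and in fact dense (in the smooth Whitney topology) in the set of smooth full $2$-rank maps whenever the latter is non-empty: this follows from Proposition~\ref{prop:dense} combined with the openness of the defining relation, so the openness statement is not vacuous. The examples in the range $n+s_n > q > s_n$ (e.g.\ Examples~\ref{ex:frEuc} and~\ref{ex:frTor}) show the set of full $2$-rank maps is genuinely non-empty in regimes where free maps do not exist.

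\medskip

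\noindent\textbf{Main obstacle.} There is essentially no new mathematical content here — the corollary is a formal consequence of Theorem~\ref{thm:infinv} and Gromov's abstract machinery. The only thing requiring genuine care is matching the parameters correctly when feeding $\cD_{M,q}$ into Theorem~\ref{thm:IFT}: identifying $r=2$ (order of $\cD_{M,q}$), $s=0$ (order of the infinitesimal inverse), and the defect $d = d(n,m)$ from Theorem~\ref{thm:infinv}, and then checking that the resulting regularity threshold $\bar s = \max\{d, 2r+s\} = \max\{d,4\} = d$ is consistent with what was used in Theorem~\ref{thm:isom}. Once this is confirmed, the corollary follows in one line.
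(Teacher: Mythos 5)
Your proposal is correct and matches the paper's (implicit) derivation: the corollary is stated as an immediate consequence of Theorem~\ref{thm:infinv} together with the Corollary to Gromov's Theorem~\ref{thm:IFT}, which is exactly the route you take. Your self-correction to $r=2$ and your check that $\bar s=\max\{d,2r+s\}=d$ (since $d\geq nm+3\geq 4$) are precisely the bookkeeping points needed to make the application of Gromov's corollary legitimate.
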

\begin{remark}
  Notice that the condition $q\geq n+s_n-\sqrt{n/2}-1/2$ is weaker but almost equivalent to $q\geq n+s_n-m_n$ since $m_n\leq \sqrt{n/2}\leq m_n+1/2$.
\end{remark}
In Examples~\ref{ex:frEuc} and~\ref{ex:frTor} we pointed out that full-rank maps arise on Euclidean spaces for all allowed combinations of $n$, $m$ and $q$ and in some 
particular case on tori. We conclude the article by briefly discussing the existence of maps of full rank on general open manifolds (namely manifolds without compact components).
\begin{theorem}[Gromov, 1986]
  \label{thm:om}
  Let $E$ be an open smooth manifold, $F\to E$ be a smooth fibration over it and $\cR\subset J^rF$ an open subset invariant by the action of $\Diff(E)$.
  Denote by $\Hol^r(\cR)\subset\Gamma^0\cR$ the set of {\em holonomic} sections of $\cR\to E$, namely sections $\phi=j^rf$ for some $f:E\to F$.
  Then $\cR$ satisfies the {\em parametric h-principle}, namely the map $j^r:\Hol^r(\cR)\to\Gamma^0\cR$ is a weak homotopy equivalence. 
\end{theorem}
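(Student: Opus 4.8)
The plan is to reduce the statement to the parametric, relative \emph{Holonomic Approximation Theorem}, exploiting the two structural hypotheses — that $E$ is open and that $\cR\subset J^rF$ is both open and $\Diff(E)$-invariant. The non-parametric ($\pi_0$) case already contains the whole idea, so I would set it up first and then run the same construction with parameters.

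The geometric input is the elementary fact that an open manifold $E$ admits a ``core'': a (locally finite, possibly noncompact) subcomplex $K\subset E$ of codimension $\geq1$ onto which $E$ deformation retracts, together with the compression property that for every neighbourhood $U\supset K$ and every compact $C\subset E$ there is a diffeotopy $g_t\in\Diff(E)$, $g_0=\id$, with $g_1(C)\subset U$; in the open-manifold setting one in fact gets a diffeomorphism $g_1$ of $E$ onto an open subset contained in $U$. Given a formal solution $\phi\in\Gamma^0\cR$, I would restrict it to a neighbourhood of $K$ and invoke the Holonomic Approximation Theorem: there is a $C^0$-small isotopy $h_t$ of $E$ and a genuine section $f$, defined on a neighbourhood of $h_1(K)$, whose $r$-jet $j^rf$ is as $C^0$-close as we like to $(h_1)_*\phi$ on that neighbourhood. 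Since $\cR$ is \emph{open}, a sufficiently close approximation keeps $j^rf$ inside $\cR$; since $\cR$ is $\Diff(E)$-\emph{invariant}, the isotopy $h_t$ used to wiggle $K$ never pushes us out of $\cR$. Finally, precomposing $f$ with a compression diffeomorphism $g_1$ carrying all of $E$ into the neighbourhood of $h_1(K)$ where $f$ lives, and using $\Diff(E)$-invariance of $\cR$ once more, the section $g_1^{*}(j^rf)=j^r(f\circ g_1)$ is globally defined on $E$, is genuinely holonomic, and lies in $\cR$. Concatenating the homotopies $\phi\rightsquigarrow g_t^{*}\phi$, $\rightsquigarrow$ ($h_t$-moved version) $\rightsquigarrow j^rf\rightsquigarrow j^r(f\circ g_1)$ — all of which stay in the open set $\cR$ — shows that every component of $\Gamma^0\cR$ contains a holonomic section, i.e. the plain h-principle.

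To upgrade this to the assertion that $j^r\colon\Hol^r(\cR)\to\Gamma^0\cR$ is a weak homotopy equivalence, I would replace $\phi$ by a family $\{\phi_p\}_{p\in P}$ over a compact polyhedron $P$ which is already holonomic over a subpolyhedron $P_0\subset P$, and repeat the three steps verbatim using the \emph{parametric relative} Holonomic Approximation Theorem: compactness of $P$ makes the $C^0$-estimates (hence the inclusion into the open set $\cR$) uniform in $p$, and the relative form leaves the members over $P_0$ holonomic throughout. Applying this with $(P,P_0)=(D^k,S^{k-1})$ for every $k\geq0$ fills every square $S^{k-1}\to\Hol^r(\cR)$, $D^k\to\Gamma^0\cR$ up to homotopy rel $S^{k-1}$, which is exactly the statement that $j^r$ induces isomorphisms on all homotopy groups (for all basepoints), i.e. a weak homotopy equivalence.

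I expect the main obstacle to be the Holonomic Approximation Theorem itself — the inductive construction, cell by cell over the skeleta of the core $K$, of a holonomic section $C^0$-approximating the prescribed jet section at the cost of an arbitrarily small isotopy of $K$ — which is the genuine geometric heart of the argument; by contrast the existence of the core and the compression diffeotopy, and the two uses of openness and $\Diff(E)$-invariance, are comparatively formal. A secondary point that needs care is the compatibility of the $\Diff(E)$-action on $F$ (and the induced action on $J^rF$) with pullback of sections and with the operation $f\mapsto j^rf$, so that transporting a holonomic $\cR$-section by a diffeomorphism of $E$ again yields a holonomic section of $\cR$; this is automatic in the case relevant to maps of full rank, where $F$ is a product $E\times N$ and $\Diff(E)$ acts on the first factor.
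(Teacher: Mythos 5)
The paper does not give a proof of this theorem: it is stated as a quotation of Gromov's result and the reader is referred to \cite{Gro86}, Sec.~1.2.1~(C), and \cite{EM02}, Sec.~6.2, so there is no proof in the text to compare against. Your outline is a correct rendering of the modern proof via the Holonomic Approximation Theorem, which is exactly the argument carried out in \cite{EM02}, Sec.~6.2, one of the two references the paper points to. Gromov's original route in \cite{Gro86} instead runs through the formalism of (micro)flexible sheaves and the covering-homotopy method, where the essential point is that the sheaf of holonomic $\cR$-sections is flexible over an open manifold; the holonomic-approximation approach you use circumvents that axiomatic machinery and makes the role of the positive-codimension core $K$, the small wiggling isotopy $h_t$, and the compressing diffeotopy $g_t$ completely explicit. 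The two routes are of comparable depth: the sheaf-theoretic one buys a reusable abstract framework applicable well beyond this theorem, while the holonomic-approximation one is self-contained and concrete. You correctly single out the Holonomic Approximation Theorem itself --- the inductive, cell-by-cell construction of a holonomic approximation near a slightly isotoped $K$ --- as the real technical content, and you correctly flag that the $\Diff(E)$-action on $F$ and on $J^rF$, and its compatibility with $j^r$ and with pullback, has to be given as extra data (a natural-bundle structure) for a general fibration $F\to E$; this is implicit in Gromov's formulation and is trivially satisfied in the paper's own application, where $F=M\times\R^q$ and $\Diff(M)$ acts on the base factor. One small point worth making explicit when you upgrade to the parametric statement: the relative Holonomic Approximation Theorem is applied with the family already holonomic over $P_0\times E$ \emph{and} over $P\times\mathcal{O}p(\partial)$ of whatever parameter cell you are filling, and uniformity of the $C^0$-estimates over the compact $P$ is what keeps the whole homotopy inside the open set $\cR$; you say this, but it is the only place where compactness of $P$ is genuinely used and it deserves a sentence of its own in a written-out proof.
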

\begin{remark}
  We recall that, in particular, this means that $j^r$ induces an isomorphism between the first homotopy groups $\pi_0(\Hol^r(\cR))$
  and $\pi_0(\Gamma^0\cR)$. The surjectivity expresses the {\em h-principle} for $\cR$, namely that every section $E\to\cR$ can be
  homotoped, in $\Gamma^0\cR$, to the $r$-jet of some section $f:E\to F$. The injectivity expresses the {\em 1-parametric h-principle}, namely
  that every two holonomic sections $j^rf_1$, $j^rf_2$ that are homotopic in $\Gamma^0\cR$ are also homotopic in $\Hol^r(\cR)$. 
  See~\cite{Gro86}, Sec. 1.2.1 (C), and~\cite{EM02}, Sec. 6.2, for more details about multi-parametric and other flavors of h-principles.
\end{remark}
Consider the subbundle $\cF^r(M,\R^q)\subset J^r(M,\bR^q)$, with $s_{n,r-1}<q<s_{n,r}$, whose fibers over each point $x$ are the subset 
such that $(f^i_A)$, $|A|\leq r$, seen as a $q\times s_{n,r}$ matrix, has rank $q$. Then $\Hol^r(\cF^r(M,\R^q))=F^r(M,\R^q)$. 
%is exactly the set of the full $r$-rank maps from $M$ to $\R^q$. 
Clearly $\cF^r(M,\R^q)$ is open and invariant by all diffeomorphisms of the base, so that we can claim the following result:
\begin{corollary}
  \label{cor:h-Princ}
  Let $M$ be an open manifold. Then $F^r(M,\R^q)\neq\emptyset$ if and only if there exist $C^0$ sections $M\to\cF^r(M,\R^q)$.
  In particular full $r$-rank maps $M\to\R^q$ arise on any parallelizable open manifold $M$ for all $s_{n,r-1}<q<s_{n,r}$.
\end{corollary}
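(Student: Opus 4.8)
The plan is to deduce Corollary~\ref{cor:h-Princ} directly from Gromov's h-principle for open manifolds (Theorem~\ref{thm:om}) applied to the differential relation $\cR=\cF^r(M,\R^q)\subset J^r(M,\R^q)$. First I would record the two structural facts about $\cF^r(M,\R^q)$ that make Theorem~\ref{thm:om} applicable: it is an \emph{open} subset of the jet space, since the condition ``$(f^i_A)_{|A|\le r}$, viewed as a $q\times s_{n,r}$ matrix, has rank $q$'' is the non-vanishing of at least one $q\times q$ minor and hence an open condition in the fiber coordinates; and it is \emph{invariant under the natural action of $\Diff(M)$ on $J^r(M,\R^q)$}, since a change of coordinates on the base acts on the $r$-jet of a map by an invertible (block-triangular) linear transformation of the derivative coordinates, which does not change the rank of the derivative matrix. (One should note that $\R^q$ here is just the model fiber, so $F=M\times\R^q\to M$ is a trivial smooth fibration, as required by the theorem.)

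Next I would identify the holonomic sections. By the very definition of $\cF^r(M,\R^q)$, a holonomic section $\phi=j^rf$ of $\cR\to M$ is exactly an $r$-jet of a map $f:M\to\R^q$ all of whose derivative matrices $D^r_xf$ have rank $q$ at every $x$, which is precisely the defining condition of a full $r$-rank map when $s_{n,r-1}<q<s_{n,r}$ (here, since $q>s_{n,r-1}$, demanding rank $q$ of the full matrix $D^r_xf$ forces the lower-order block $D^{r-1}_xf$ to have maximal rank $s_{n,r-1}$ as well, so $f$ is automatically $(r-1)$-free). Hence $\Hol^r(\cF^r(M,\R^q))=F^r(M,\R^q)$, as asserted in the text just before the corollary.

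With these identifications in place, the first statement is immediate: Theorem~\ref{thm:om} gives that $j^r:\Hol^r(\cF^r(M,\R^q))\to\Gamma^0\cF^r(M,\R^q)$ is a weak homotopy equivalence, in particular surjective on $\pi_0$ and in fact (by surjectivity onto connected components together with the density part of the h-principle) the source is non-empty if and only if the target is non-empty. Thus $F^r(M,\R^q)=\Hol^r(\cF^r(M,\R^q))\neq\emptyset$ iff there exists a $C^0$ section $M\to\cF^r(M,\R^q)$. For the last sentence I would produce such a section by hand when $M$ is parallelizable: a trivialization of $TM$ gives a global frame $\partial_1,\dots,\partial_n$, hence a global identification of $J^r(M,\R^q)$ over $M$ with $M\times\R^{q\,s_{n,r}}$, and it then suffices to exhibit a single $q\times s_{n,r}$ matrix of rank $q$ — e.g. the jet data mimicking the Veronese-type maps of Example~\ref{ex:frEuc}, which plainly have a full-rank derivative matrix — and take it as a constant section. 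Since $q\le s_{n,r}$ this is possible, and a constant section of the trivial bundle is automatically continuous and lands in the open rank-$q$ locus.

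The only genuine point requiring care — the ``main obstacle'', such as it is — is verifying the $\Diff(M)$-invariance and openness of $\cR$ cleanly, i.e. checking that the change-of-variables action on $r$-jets really is by rank-preserving linear maps on the derivative block; everything else is a direct citation of Theorem~\ref{thm:om} plus the elementary parallelizable-case construction. No estimates, transversality arguments, or the machinery of Sections 3--5 are needed here: this corollary lives entirely on the h-principle side of the paper.
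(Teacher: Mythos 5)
Your overall strategy is exactly the paper's: invoke Theorem~\ref{thm:om} for the open, $\Diff(M)$-invariant differential relation $\cF^r(M,\R^q)$, identify holonomic sections with full $r$-rank maps, and then produce a $C^0$ section directly from a trivialization of $TM$ plus a single constant rank-$q$ jet (the Veronese data of Example~\ref{ex:frEuc}). The paper states all of this in one sentence; your expansion of the openness, $\Diff$-invariance, and the parallelizable case is correct.

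There is, however, one false parenthetical claim: that for $q>s_{n,r-1}$, requiring $\rk D^r_xf=q$ automatically forces $\rk D^{r-1}_xf=s_{n,r-1}$, i.e.\ that the rank-$q$ locus alone already captures full $r$-rank. This does not follow. A $q\times s_{n,r}$ matrix can have full row rank $q$ while its first $s_{n,r-1}$ columns are linearly dependent — for instance two low-order columns could coincide, with the missing direction supplied by a higher-order column. Recall that Definition~\ref{def:rank} makes $(r-1)$-freeness an explicit \emph{extra} requirement on a full $r$-rank map, not a consequence. The correct reading is therefore to define $\cF^r(M,\R^q)$ by \emph{both} open conditions (nonvanishing of an $s_{n,r-1}\times s_{n,r-1}$ minor of the low-order block \emph{and} of a $q\times q$ minor of the full block), each of which is $\Diff(M)$-invariant because a base diffeomorphism acts by right multiplication by an invertible block-lower-triangular matrix, preserving the rank of the full matrix as well as the rank of the first $s_{n,r-1}$ columns. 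With this corrected $\cF^r$ one has $\Hol^r(\cF^r(M,\R^q))=F^r(M,\R^q)$ by definition, Theorem~\ref{thm:om} still applies, and the rest of your argument goes through unchanged. (This also tightens the paper's own description, which records only the rank-$q$ condition.)
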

%
%\section*{Acknowledgments}
%...
%
\bibliography{refs}
\end{document}